\documentclass[arxiv,final]{elsarticle}

\usepackage{mathrsfs}
\usepackage{enumerate}
\usepackage[utf8]{inputenc}
\usepackage{algpseudocode}
\usepackage{siunitx}

\journal{Journal of Computational and Applied Mathematics}

\sisetup{
	group-separator={,}
}

\renewcommand{\leq}{\leqslant}
\renewcommand{\geq}{\geqslant}
\renewcommand{\le}{\leq}
\renewcommand{\ge}{\geq}

\newcommand\TheTitle{Finite element model updating for structural applications}

\makeatletter
\@ifclasswith{elsarticle}{arxiv}{
	\def\ps@pprintTitle{%
		\let\@oddhead\@empty
		\let\@evenhead\@empty
		\def\@oddfoot{\centerline{\thepage}}%
		\let\@evenfoot\@oddfoot}
}{}
\makeatother

\title{\TheTitle}

\usepackage{amsmath,amssymb,amsthm}
\usepackage{algorithm}
\usepackage{mathtools}
\usepackage{tikz}
\usepackage{pgfplots}
\usepackage{geometry}
\usepackage{color}
\usepackage{bbm}

\usetikzlibrary{patterns}


\newcommand{\fr}{\phi^{\cal R}}
\newcommand{\f}{\phi}
\newcommand{\IR}{\mathbb R}

\usepackage{amsopn}
\DeclarePairedDelimiter{\norm}{\lVert}{\rVert}

\newcommand{\matlab}{\texttt{MATLAB}}

\theoremstyle{plain}
\newtheorem{lemma}{Lemma}[section]
\newtheorem{theorem}[lemma]{Theorem}

\theoremstyle{remark}
\newtheorem{remark}[lemma]{Remark}

\theoremstyle{definition}


\renewcommand{\leq}{\leqslant}
\renewcommand{\geq}{\geqslant}
\renewcommand{\tilde}{\widetilde}

\begin{document}

	\begin{frontmatter}
		
		\author[ISTI]{Maria Girardi\fnref{moscardo}}
		\ead{maria.girardi@isti.cnr.it}
		
		\author[ISTI]{Cristina Padovani\fnref{moscardo}}
		\ead{cristina.padovani@isti.cnr.it}
		
		\author[ISTI]{Daniele Pellegrini\fnref{moscardo}}
		\ead{daniele.pellegrini@isti.cnr.it}
		
		\author[ISTI,UNIFI]{Margherita Porcelli}
		\ead{margherita.porcelli@unifi.it}
		
		\author[ISTI]{Leonardo Robol\corref{cor1}\fnref{moscardo}}
		\ead{leonardo.robol@isti.cnr.it}
		
		\fntext[moscardo]{This research has been partially supported by the
			Region of Tuscany (Project ``MOSCARDO - ICT technologies for
			structural monitoring of age-old constructions based on wireless
			sensor networks and drones'', 2016--2018, FAR FAS), and by the
			GNCS/INdAM project ``Metodi numerici avanzati per equazioni e
			funzioni di matrici con struttura''.}
		
		\address[ISTI]{Institute of Information Science and
			    Technologies ``A. Faedo'', ISTI-CNR, Pisa, Italy.}
		
		\address[UNIFI]{Dipartimento di Ingegneria Industriale, %
			    Università degli Studi di Firenze, Italy,}
		    
		    \cortext[cor1]{Corresponding author}
		
		\title{\TheTitle}

\begin{abstract}
	A novel method for performing model updating on finite element models is presented. 
	The approach is particularly tailored to modal analyses of buildings, by which
	the 
	lowest frequencies, obtained by using sensors and system identification
	approaches, need to be matched to the numerical ones predicted by the model. 
	This is done by optimizing some unknown material parameters (such as mass density and Young's modulus)
	of the materials and/or the boundary conditions, 
	which are often known only approximately. In particular, this is the case
	when considering historical buildings.
	
	The straightforward application of a general-purpose optimizer can be 
	impractical, given
	the large size of the model involved. In the paper, we show that, by slightly modifying the projection
	scheme used to compute the eigenvalues at the lowest end of the spectrum 
	one can obtain  local parametric reduced order models
	that, embedded in a trust-region scheme, form the basis for a reliable and efficient specialized algorithm. 
	
	We describe an optimization strategy based on this approach, and we provide
	numerical experiments that confirm its effectiveness and accuracy.
\end{abstract} 
		
		\begin{keyword}    
			Model updating, Finite elements, Trust-region, Lanczos,
			Eigenvalue optimization
			\MSC[2010] 65F18, 15A22, 65L60, 74S04, 70J10.
		\end{keyword}
		
	\end{frontmatter}

\section{Introduction}
\label{sec:introduction}

Finite element (FE) model updating is a procedure aimed at calibrating the
FE model of a structure in order to match numerical and
experimental results. Introduced in the 1980s, it turned out to play
a crucial role in the design, analysis and maintenance of aerospace,
mechanical and civil engineering structures \cite{chen1980analytical,douglas1982dynamic,friswell2013finite,marwala2010finite}.
In structural mechanics, model updating techniques are used in
conjunction with vibrations measurements to determine unknown system
characteristics, such as the materials’ properties, constraints, etc.
The resulting updated FE model can then be used to obtain
reliable predictions on the dynamic behavior of the structure
subjected to time-dependent loads. A further important application
of model updating, within the framework of structural health
monitoring, is damage detection 
\cite{simoen2015dealing,teughels2005damage}. Within this framework, 
damage can be identified
based on the
assumption that its presence is associated with a
decrease in the stiffness of some elements, with consequent changes
to the structure's modal characteristics.

Finite element model updating involves the solution of a constrained
optimization problem, whose objective function is generally expressed as
the discrepancy between experimental and numerical quantities, such
as the structure's natural frequencies and mode shapes. The constraints
are given by the boundary conditions and other physical limitations to the
degrees of freedom involved. 
Ill-posedness or
ill-conditioning can affect model updating formulations and lead to numerical
problems due to inaccuracy in the model and lack of
information in the measurements. Among the methods aimed at
quantifying uncertainties, the probabilistic Bayesian approach is
one of the most adopted \cite{simoen2015dealing}.

Application of FE model updating to ancient masonry
buildings is relatively recent. In
\cite{aoki2007structural,araujo2012seismic,
	cabboi2017continuous,
	ceravolo2016vibration,compan2017structural,
	erdogan2017discrete,fragonara2017dynamic,
	gentile2007ambient,kocaturk2017investigation,
	perez2011characterization,
	ramos2011dynamic,ramos2010monitoring,%
    torres2017operational} a vibration-based model updating is conducted,
and preliminary FE models are tuned by using the dynamic
characteristics determined through system identification techniques.
In the papers cited above the modal analysis of the FE models are
conducted via commercial codes, and the model updating procedure is
implemented separately. In this paper, on the contrary, FE
model updating is integrated within a software package,
the NOSA-ITACA code \cite{nosa2017}, able to manage the large-scale
problems encountered in applications. In particular, the algorithms
for the solution of the constrained optimization problem integrated in
NOSA-ITACA exploit the structure of the stiffness and mass matrices
and the fact that only a few of the smallest eigenvalues have to be
calculated. This new procedure reduces both the total computation
time of the numerical process and user’s effort, thus
providing the scientific and technical communities with efficient
algorithms specific for FE model updating.

A simple form of FE model updating has been employed in
combination with the NOSA-ITACA code in 
\cite{azzara2016assessment,azzara2018influence,pellegrini2017anew},
to perform modal analyses of the San Frediano bell tower and the Clock
tower in Lucca, Italy. In these works the
optimal 
values of Young’s modulus and the materials' mass density
are determined by fitting the data measured by seismometric stations
placed on the towers and running several simulation on a grid 
of feasible values. However, this approach becomes impractical 
if the number of free parameters or the size of the model
is considerably increased. 

Model reduction techniques have long been used to
reduce the size of complex FE models to a more
manageable order. This allows, for example, performing more demanding,
and computationally costly,
numerical tasks, such as optimization, simulation, and so on, in an
efficient manner. In particular, reduced models are natural
candidates for optimization of the model's free parameters,
either to improve some engineering properties or to better match the
empirically measured modes. In practice, this step often turns into
an eigenvalue optimization, where part of the spectral structure is
tuned following some prescribed criteria.

However, when the model depends on parameters, it is generally
non-trivial to obtain a reduced parametric model that accurately
reflects the behavior of the original one for all possible parameter values. This is precisely the aim of so-called {parametric
	model reduction}, which has recently been used in
combination with optimization algorithms \cite{amsallem2015design,o2017computing,qian2017certified,%
zahr2015progressive},
especially involving 
trust-region solvers \cite{Biegler03,yue2013accelerating}. In this
work, we propose an efficient model reduction strategy derived
by properly recycling the Lanczos projection. Our approach is 
tailored to the needs of structural FE analysis. 

In fact, FE study of buildings has some peculiar
features that justify trying to devise an adapted method. As
discussed in the following, in this context the natural frequencies of
interest are those at the lowest end of the real spectrum.
In order to compute them accurately, the natural
choice is an (inverse) Lanczos method. When a parametric model is
given, the Lanczos projection can be re-interpreted as a
parameter dependent model reduction, whereby only the relevant
part of the spectrum is matched. We show that this step can be
performed efficiently, and that its combination with a trust-region
method allows matching the measured
frequencies with the ones predicted by the parametric model.

The paper is organized as follows. In Section~\ref{sec:model}
we briefly discuss the FE model for the structural applications
we are interested in, and formulate the optimization problem
related to model updating. In Section~\ref{sec:trustregion} we recall
the requirement for a model to be used in a trust-region
optimization method, in particular those required to
guarantee convergence. Section~\ref{sec:reducedmodel} 
describes how to construct a model that satisfies these requirements
by slightly modifying the Lanczos projection step used to compute
the smallest frequencies, and finally Section~\ref{sec:examples}
illustrates some 
tests of our implementation on some problems. In Section~\ref{sec:concluding-remarks}
we draw some conclusions 
and discuss future lines of research. 

\section{The model: modal analysis of masonry buildings and frequency matching}
  \label{sec:model}

Although the constituent masonry materials of historical buildings
exhibit different strengths under tension and compression and, therefore, 
behave nonlinearly, modal analysis, which is based on the
assumption that the materials are linear elastic, is widely used in
applications and provides important qualitative information on the
dynamic behavior of masonry structures.
Modal analysis consists in the solution of the constrained
generalized eigenvalue problem
\begin{align} \label{gep_MK}
K\, v &= \omega^2\, M\,v, & 
\text{subject to } \ \ C v &=0,
\end{align}
with $C\in \mathbb{R}^{h\times n}$ and $h\ll n$. The left part
of \eqref{gep_MK} is derived from the differential equation
\begin{equation}\label{edo}
 M \ddot{u} + K u = 0,
\end{equation}
governing the undamped free vibrations of a linear elastic structure
discretized into finite elements. In \eqref{edo} $u$ is the
displacement vector, which belongs to $\mathbb{R}^{n}$ and depends
on time $t$, $\ddot{u}$ is the second-derivative of $u$ with respect
to $t$, and $K$ and $M\in \mathbb{R}^{n\times n}$ are the stiffness
and mass matrices of the FE model. $K$ is symmetric
and positive-semidefinite, $M$ is symmetric and positive-definite,
and both are sparse and banded. Displacements $u_i$ are also called
degrees of freedom; the integer $n$ is the total number of degrees
of freedom of the system and is generally very large, since it
depends on the level of discretization of the problem. By assuming
that
\begin{equation}\label{Usin}
 u = v \cdot \sin(\omega t),
\end{equation}
with $v$ a vector of $\mathbb{R}^{n}$ and $\omega$ a real scalar,
and applying the mode superposition procedure
\cite{bathe1976numerical}, equation (\ref{edo}) is transformed into
the generalized eigenvalue problem (\ref{gep_MK}). The right part of equation~\eqref{gep_MK} expresses the fixed constraints and the master-slave
relations assigned to displacement $u$, written in terms of vector
$v$. Imposing the constraints and boundary conditions is equivalent
to projecting the matrices $K$ and $M$ on a subspace where they
are a symmetric positive definite pencil (the right kernel of the operator $C$). 
In the following, we assume that 
this projection has already been done  --- and we refer the reader
to \cite{porcelli2015solution} for further details. Notice, in particular,
that if $K$ and $M$ depend \emph{linearly}
on some parameters $\mathbf x := (x_1, \ldots, x_\ell)$, 
the same holds true for their projection. More generally, the smooth
dependency of $K$ and $M$ is preserved by the projection, since 
it is a linear operation. This will be exploited in the 
analysis of the parametric projection in Section~\ref{sec:reducedmodel}. 

The eigenvalues $\omega_i^2$ of (\ref{gep_MK}) are linked to the
natural frequencies, or eigenfrequencies $f_i$ of the structure
via the relation $f_i=\omega_i/2\pi$, and the eigenvectors $v^{(i)}$
are the corresponding mode shape vectors, or eigenmodes. Together
with the natural frequencies, the mode shapes provide a good deal of qualitative
information on the structure's deformations under dynamic loads.

Measuring the vibrations of masonry buildings is a common practice
for assessing their dynamic behavior. Historic constructions are
subjected to a number sources of vibrations, such as traffic,
micro--tremors, wind and earthquakes. The availability of
sensitive instruments to detect buildings' movements makes it
possible to conduct accurate, long--term monitoring campaigns. In
fact, ambient vibration monitoring can provide important information
on the structural health of old masonry constructions, as it is a
non-destructive technique able to capture the most important
features of their dynamic behavior, such as natural frequencies,
damping ratios, mode shapes and wave propagation velocities. Once
the influence of environmental factors has been accounted for,
changes in these dynamic properties over time may represent
effective structural damage indicators \cite{azzara2018influence}.

FE model updating is a procedure that enables 
calibrating a finite element model of a structure in order to match
numerical and experimental results. Thus, model updating techniques
are used in conjunction with vibrations measurements to determine
a structure's characteristic, such
as materials' properties, constraints, etc., which are generally unknown. 

\subsection{Formulation of the optimization problem} \label{sec:formulation-optimization}

The model updating problem can be reformulated as an optimization problem
by assuming that the (projected) stiffness and mass matrices $K$ and $M$
are functions of the parameter vectors $\mathbf x$. We use the
notation
\[
K := K(\mathbf x), \qquad
M := M(\mathbf x), \qquad
\mathbf x \in \mathbb R^\ell, 
\]
to denote this dependency. The set of valid choices for the parameters is 
denoted by $\Omega$, which we assume to be an $\ell$-dimensional
box, that is
\begin{equation} \label{eq:Omega}
\Omega = [ a_1 , b_1 ] \times \ldots \times [ a_\ell, b_\ell ], 
\end{equation}
for certain values $a_i < b_i$, $i = 1, \ldots, \ell$. We also assume
that initial estimates for the parameter values are available, and 
we denote these by $\mathbf x^{(0)}$. When this is not the case, we choose
$\mathbf x^{(0)}$ as the center of the box $\Omega$ (i.e., the vector with the midpoints 
of the intervals $[a_i, b_i]$ as coordinates). 

Our ultimate aim is to determine the optimal value of 
$\mathbf x$ that minimizes a certain cost functional $\phi(\mathbf x)$
within the box $\Omega$.
This is an instance of the optimization problem
\begin{equation}  
\label{eq:optpb}
\min_{\mathbf x \in \Omega} \phi(\mathbf x).
\end{equation}

The choice of the objective function $\phi(\mathbf x)$ is related
to the frequencies that we want to match. If we need to match 
$s$ frequencies
of the model, we choose a suitable weight vector
$\mathbf w := [ w_1 , \ldots, w_s ]$, with $w_i \geq 0$, 
and define the functional
$\phi(\mathbf x)$ as follows:
\begin{equation}  
\label{eq:obj}
\phi(\mathbf x) := 
\norm*{
	\frac{\sqrt{\Lambda_s(K, M)}}{2\pi} - \mathbf f }_{\mathbf w,2}^2, \qquad
\norm{\mathbf y}_{\mathbf w, 2} := \sqrt{\mathbf y^T D_{\mathbf w}^2 \mathbf y}, 
\end{equation}
where $\mathbf f$ is the vector of the measured frequencies,
$D_{\mathbf w} = \mathrm{diag}(w_1, \ldots, w_s)$
and $\Lambda_s(K, M)$ is the vector containing the smallest $s$
eigenvalues of the pencil $K - \lambda M$, 
ordered according to their magnitude. 

The vector $\mathbf w$ encodes the weights that should be given
to each frequency in the optimization scheme. If the aim is to 
minimize the distance between the vector of measured and computed 
frequencies in the usual Euclidean norm, then $\mathbf w = \mathbbm{1}_s$, 
the vector of all ones, should be chosen. 
If, instead, a relative accuracy on the frequencies is desired, 
$w_i = f_i^{-1}$ is the natural choice. To avoid scaling issues in the
objective function, we always normalize $\mathbf w$ to have 
Euclidean norm $1$.

\section{The optimization framework}
\label{sec:trustregion}

In this section we describe the optimization framework that we use. 
We choose to rely on a trust-region scheme, which is defined by building a sequence of 
local models for the objective
function. 

Construction of the local models is discussed in Section~\ref{sec:reducedmodel}, 
where we show that such models satisfy the minimal requirements for 
a well-defined  trust-region scheme. 
The practical choice of the objective function and the weights
has been deferred to Section~\ref{sec:examples}, 
where several options are discussed. 

\subsection{Trust region methods}
Trust-region methods are well-known iterative methods for solving 
bound-constrained optimization problems, see e.g. \cite{conn1988global,trBible,porcelli13}. 
Their main distinctive features are robustness and
convergence to first-order critical points regardless of the choice of 
the initial estimate.

We now review the basic steps in a trust-region algorithm for solving 
the
bound-constrained optimization problem of Equation~\eqref{eq:optpb}, 
where $\f:\IR^\ell \rightarrow \IR$ is a smooth function and
$\Omega$ is defined as in \eqref{eq:Omega}. 

The key idea of a trust-region method is to define, at iteration $k$, 
a model $\fr_k$ for the objective function $\f$ 
around the 
current  iterate $\mathbf x^{(k)}$, together with a 
region within such model can be trusted to provide an
adequate representation of $\f$.
The trial step $\mathbf s^{(k)}$ is then computed, either exactly
or approximately, by minimizing this model within the trust-region. 
The trust-region is the set of all points
$$
{\mathcal B}_k = \{\mathbf x \in \IR^\ell |\, \|\mathbf x - \mathbf x^{(k)}\|\le \Delta_k\},
$$
where $\Delta_k$ is called the trust-region radius and $\|\cdot\|$ is a  norm
equivalent to the Euclidean norm.

Due to the presence of bound-constraints, all the generated
iterates $\mathbf x^{(k)}$ must be ensured to be feasible, that is, $\mathbf x^{(k)} \in \Omega$ for all $k\ge 0$.
One strategy consists in minimizing the model within the set $\mathcal B_k \cap \Omega$.
When the feasible set $\Omega$ is a box, 
the trust-region is generally defined using the $\infty$ norm.

Having minimized the model on $\mathcal B_k \cap \Omega$, it must be decided
whether to accept the trial step or to change the trust-region radius.
Usually, the 
trust-region radius and the new point $\mathbf x^{(k)} + \mathbf s^{(k)}$ are tested simultaneously 
to assess the quality of the approximation yielded by the local model. This is
measured 
using the ratio $\rho^{(k)}$ between the actual and the predicted reduction defined as follows:
\begin{equation}\label{eq:ratio}
 \rho^{(k)} = \frac{\f(\mathbf x^{(k)})-\f(\mathbf x^{(k)}+\mathbf s^{(k)})}{\fr_k(\mathbf x^{(k)}) - \fr_k(\mathbf x^{(k)}+ \mathbf s^{(k)})}
\end{equation}
If $\rho^{(k)}$ is close to 1, there is good agreement between the 
model $\fr$ and the function $\f$ over this step, so $\mathbf x^{(k)} + \mathbf s^{(k)}$ is accepted 
as the new iterate and it is safe to increase the radius of
the trust-region for the next iteration.
If $\rho^{(k)}$ is positive but not close to 1, then $\mathbf x^{(k+1)} = \mathbf x^{(k)} + \mathbf s^{(k)}$ but the 
trust-region radius is not altered. If  $\rho^{(k)}$ is close to zero or
negative, step $\mathbf s^{(k)}$ is rejected and the trust-region radius is shrunk. 

Convergence of the iterative process is declared when a suitable
criticality measure is sufficiently small. We consider the
measure 
\begin{equation}\label{eq:crit}
  \chi(\mathbf x^{(k)}) = \|P_{\Omega}(\mathbf x^{(k)} - \nabla \f(\mathbf x^{(k)})) - \mathbf x^{(k)})\| 
\end{equation}
where $P_{\Omega}$ is the projection onto the feasible set, and  $\chi(\mathbf x^{(k)})$ is the
norm of the projected gradient onto the box, and reduces to $\norm{\nabla \f(\mathbf x^{(k)})}$
when $\mathbf x^{(k)}$ is in the interior of $\Omega$.
The approach is summarized in the pseudocode of Algorithm \ref{alg:tr} where we assume that the
model functions are minimized ``exactly'' (as it  will be the case in our applications, see Section
\ref{sec:impl}).

\begin{algorithm}
  \begin{algorithmic}
  \Require The initial point $\mathbf x^{(0)} \in \Omega$, 
     an initial trust-region radius $\Delta_0$,
    the constants $\eta_1, \eta_2, \gamma_1, \gamma_2$ such that
  \[
  0 < \eta_1 \le \eta_2 <1 \mbox{ and } 0 < \gamma_1\le \gamma_2 <1,
  \]
  $k_{max}>0$ and $\epsilon>0$.
  
  \State Compute $\f(\mathbf x^{(0)})$
  \For{$k = 1, \ldots, k_{max}$}
      \State Define a model $\fr_k$ in $\mathcal B_k\cap \Omega$.
      \State Compute a step $\mathbf s^{(k)}$ that minimizes the model and 
         such that $\mathbf x^{(k)} + \mathbf s^{(k)} \in \mathcal B_k\cap \Omega$.
      \State Compute $\f(\mathbf x^{(k)}+\mathbf s^{(k)})$ and define the ratio (\ref{eq:ratio}).
      \If{$\rho^{(k)} \ge 1$}
      \State define $\mathbf x^{(k+1)}= \mathbf x^{(k)} + \mathbf s^{(k)}$;
      \Else
      \State define $\mathbf x^{(k+1)}=\mathbf x^{(k)}$.
      \EndIf
      \State Update the trust-region radius setting
             \[
              \Delta_{k+1} \in \left \{ \begin{array}{ll}
                                       \left[\Delta_k, \infty \right) & \mbox{ if } \rho^{(k)} \ge \eta_2 \\
                                       \left [\gamma_2\Delta_k, \Delta_k \right ] & \mbox{ if } \rho^{(k)} \in [\eta_1, \eta_2) \\
                                      \left [\gamma_1\Delta_k, \gamma_2 \Delta_k \right] & \mbox{ if } \rho^{(k)} < \eta_1
                                      \end{array}
                             \right.
             \]

      \State Compute the criticality measure (\ref{eq:crit}).
      \If{ $\chi(\mathbf x^{(k+1)}) \le \epsilon$}
      \State \textbf{return} An approximate first-order critical point $\mathbf x^{(k+1)}$.
      \EndIf
      \EndFor
      \end{algorithmic}  
  \caption{Basic trust-region algorithm}
  \label{alg:tr}
\end{algorithm}

In order to obtain a robust and globally convergent trust-region algorithm 
involving approximate models, the following assumptions should hold \cite{trBible} 
for the objective $\f$ and the model $\fr_k$ functions:
\begin{description}
 \item{(AF.1)} $\f:\IR^\ell \rightarrow \IR$ is twice-continuously differentiable on $\Omega$.
 \item{(AF.2)} The function $\f$ is bounded below for all $\mathbf x \in \Omega$.
 \item{(AF.3)} The second derivatives of $\f$ are uniformly bounded for all $\mathbf x \in \Omega$.
\vskip 8pt
 \item{(AM.1)} For all $k$, $\fr_k$ is twice differentiable on ${\mathcal B}_k$.
 \item{(AM.2)} The values of the objective and the model function coincide at the current iterate, i.e., for
all $k$,
$$\f (\mathbf x^{(k)} ) = \fr_k (\mathbf x^{(k)} ).$$
\item{(AM.3)} The gradients of the objective and the model function coincide at the current iterate,
i.e., for all $k$,
$$\nabla \f (\mathbf x^{(k)} ) = \nabla \fr_k (\mathbf x^{(k)} ).$$
\item{(AM.4)} The second derivatives of $\fr (\mathbf x^{(k)} )$ remain bounded
within the trust-region ${\mathcal B}_k$ for all $k$.
\end{description}

\begin{theorem}\cite[Section 12.2.2]{trBible}
Under Assumptions (AF.1)-(AF.3) and (AM.1)-(AM.4), every limit point of the sequence $\{\mathbf x^{(k)}\}$ generated by Algorithm 
 \ref{alg:tr} is  first-order critical, i.e.
\[
 \lim_{k\rightarrow \infty} \|P_{\Omega}(\mathbf x^{(k)} - \nabla \f(\mathbf x^{(k)})) - \mathbf x^{(k)})\| = 0.
\]
\end{theorem}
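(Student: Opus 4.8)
The statement is the classical global convergence result for trust-region methods, and the plan is to reconstruct the standard argument of \cite[Section 12.2.2]{trBible}, adapted to the bound-constrained criticality measure $\chi$. The cornerstone is a sufficient-decrease (Cauchy) estimate: since the step $\mathbf s^{(k)}$ minimizes $\fr_k$ on $\mathcal B_k \cap \Omega$ and, by (AM.3), $\nabla \fr_k(\mathbf x^{(k)}) = \nabla \f(\mathbf x^{(k)})$, one shows that the predicted reduction is bounded below in terms of the criticality measure,
\[
\fr_k(\mathbf x^{(k)}) - \fr_k(\mathbf x^{(k)} + \mathbf s^{(k)}) \;\ge\; \kappa\, \chi(\mathbf x^{(k)})\, \min\left\{ \frac{\chi(\mathbf x^{(k)})}{1 + \beta},\, \Delta_k \right\},
\]
where $\beta$ is a uniform bound on the model Hessians (finite by (AM.4)) and $\kappa > 0$ is an absolute constant. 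This is obtained by evaluating $\fr_k$ along the projected-gradient path emanating from $\mathbf x^{(k)}$ while retaining feasibility within the box $\Omega$.

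Next I would quantify the agreement between the model and the objective. Writing the actual minus predicted reduction as $\f(\mathbf x^{(k)} + \mathbf s^{(k)}) - \fr_k(\mathbf x^{(k)} + \mathbf s^{(k)})$ (the matching values at $\mathbf x^{(k)}$ cancel by (AM.2)), a second-order Taylor expansion of both $\f$ and $\fr_k$ about $\mathbf x^{(k)}$, together with (AM.3) to cancel the first-order terms and the uniform Hessian bounds from (AF.3) and (AM.4), yields
\[
\abs*{ \f(\mathbf x^{(k)} + \mathbf s^{(k)}) - \fr_k(\mathbf x^{(k)} + \mathbf s^{(k)}) } \;\le\; C\, \norm{\mathbf s^{(k)}}^2 \;\le\; C\, \Delta_k^2 .
\]
Combining this with the Cauchy estimate shows that $\abs{\rho^{(k)} - 1} \to 0$ whenever $\Delta_k$ is small compared to $\chi(\mathbf x^{(k)})$. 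Consequently, at any point where $\chi$ stays bounded away from zero the iteration must eventually be successful and the radius cannot be driven to zero: there is a uniform lower bound $\Delta_k \ge \delta > 0$ for the relevant iterations.

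The convergence claim then follows by contradiction. Suppose some limit point $\mathbf x^\ast$ is not first-order critical, so that $\chi(\mathbf x^\ast) > 0$; since $\f$ is twice continuously differentiable by (AF.1) the measure $\chi$ is continuous, hence $\chi(\mathbf x^{(k)}) \ge \epsilon_0 > 0$ along a subsequence $\mathbf x^{(k)} \to \mathbf x^\ast$. Each successful step in this subsequence produces, by the Cauchy estimate and the radius lower bound, a decrease $\f(\mathbf x^{(k)}) - \f(\mathbf x^{(k+1)}) \ge \kappa\, \epsilon_0 \min\{ \epsilon_0/(1+\beta),\, \delta\} > 0$. Because the accepted function values are monotonically nonincreasing, infinitely many such decreases would force $\f \to -\infty$, contradicting the lower bound (AF.2). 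Hence $\chi(\mathbf x^\ast) = 0$ at every limit point, which is exactly the asserted criticality.

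The main obstacle is the Cauchy decrease estimate in the bound-constrained setting: unlike the unconstrained case, where one simply backtracks along the steepest-descent direction, here the sufficient-decrease bound must be established along the \emph{projected}-gradient path inside the box $\Omega$, and the criticality measure $\chi$ in \eqref{eq:crit} must be related to this path so that the $\min\{\cdot,\Delta_k\}$ structure emerges cleanly. Once this estimate is in place, the Taylor comparison and the boundedness-below contradiction are routine.
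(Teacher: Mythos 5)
Your toolkit is the right one, and it matches the line of argument in the reference the paper itself leans on (note the paper gives no proof of this theorem; it simply cites \cite[Section 12.2.2]{trBible}): a generalized Cauchy decrease bound along the projected-gradient path expressed through $\chi$, a Taylor-based model/objective agreement estimate of order $\Delta_k^2$ using (AM.2)--(AM.3) and the Hessian bounds from (AF.3)/(AM.4), a radius-bounded-below lemma, and a contradiction with the lower bound (AF.2). Executed carefully, however, this machinery proves the \emph{weak} result $\liminf_{k\to\infty}\chi(\mathbf x^{(k)})=0$, i.e., that \emph{at least one} limit point is first-order critical --- not the stated conclusion.

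The gap is in your final paragraph, where you pass to \emph{every} limit point. If $\mathbf x^{(k_j)}\to\mathbf x^\ast$ with $\chi(\mathbf x^\ast)>0$, then $\chi(\mathbf x^{(k_j)})\ge\epsilon_0$ holds only along that subsequence, and neither of the two facts you invoke survives this weakening. First, the lower bound $\Delta_k\ge\delta>0$ is derived under the hypothesis that $\chi$ stays above $\epsilon_0$ at \emph{all} sufficiently large iterations: the radius at iteration $k_j$ depends on the whole history, and unsuccessful iterations at intermediate indices where $\chi$ is small can drive $\Delta_{k_j}$ arbitrarily close to zero --- the algorithm never forces the radius to grow back, since on very successful steps it merely allows $\Delta_{k+1}\ge\Delta_k$. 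Second, you need \emph{infinitely many successful} iterations among the $k_j$ with decrease uniformly bounded below; once $\Delta_{k_j}\to 0$ is possible, the decrease at a successful $k_j$ is only of size $\kappa\,\epsilon_0\,\Delta_{k_j}$, whose sum can be finite, so no contradiction with (AF.2) arises. The missing idea is the standard ``crossing'' argument: having established the liminf result, pick for each $k_j$ the first index $l_j>k_j$ with $\chi(\mathbf x^{(l_j)})<\epsilon_0/2$; on the block $k_j\le k<l_j$ the Cauchy bound together with the convergence of the monotone, bounded-below sequence $\{\f(\mathbf x^{(k)})\}$ forces $\sum_{\text{successful }k\in[k_j,l_j)}\Delta_k\to 0$, hence $\|\mathbf x^{(k_j)}-\mathbf x^{(l_j)}\|\to 0$, and continuity of $\chi$ then contradicts the persistent gap $\chi(\mathbf x^{(k_j)})-\chi(\mathbf x^{(l_j)})\ge\epsilon_0/2$. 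Without this step (or an equivalent), your argument does not reach $\lim_{k\to\infty}\chi(\mathbf x^{(k)})=0$.
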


\subsection{Enforcing first-order matching} \label{sec:first-order}

Here we use the trust-region framework described in the previous section to solve
problem (\ref{eq:optpb}) with objective function (\ref{eq:obj}). Note
that conditions
(AF.1) -- (AF.3) are automatically satisfied by our definition of objective function.
As discussed in Section~\ref{sec:lanczos}, the local model 
$\tilde \phi_k^{\mathcal R}(\mathbf x)$
that we
build does not necessarily agree at the first order with the original function
$\phi(\mathbf x)$, which apparently breaks assumption (AM.3) --- and only satisfies
(AM.2). However, if the gradient $\nabla \phi_k^{\cal R}(\mathbf x)$ is known at the
expansion point $\mathbf x^{(k)}$, this can be easily fixed by defining
\begin{equation}\label{eq:corr}
 \phi_k^{\cal R} (\mathbf x) := \tilde \phi_k^{\cal R}(\mathbf x)  + (\phi(\mathbf x^{(k)}) -  \tilde \phi_k^{\cal R}(\mathbf x^{(k)})) +
                             (\nabla \phi(\mathbf x^{(k)}) - \nabla \tilde \phi_k^{\cal R}(\mathbf x^{(k)}))^T(\mathbf x-\mathbf x^{(k)}),
\end{equation}                            
see, e.g., \cite{Biegler03,Alex95,giunta00}

\section{Building the reduced model by Lanczos projection}
\label{sec:reducedmodel}

Our aim in this section is to develop a low-dimensional model of the frequencies
as a function of the parameters. The model must be efficient to evaluate, 
and needs to satisfy the requirements for use as a local model in the
trust-region approach described in Section~\ref{sec:trustregion}. 

This task can be viewed as a special case of parametric model order reduction. 
Several
approaches have been developed in this field that allow obtaining low-dimensional
models for accurately approximating the behavior of complex systems. For instance, 
when controlling large-scale dynamical systems, it is possible to employ projection
approaches when the observations and inputs have low-rank properties (see \cite{benner2015survey}
and references therein). 

Our aim is slightly different; we are interested only in
a particular region on the spectrum (the lowest frequencies), but 
we have no particular assumptions on the input (which is typically
induced by the environment). 

In the next sections, we first briefly summarize the standard 
(inverse) Lanczos iteration for matrix pencils, 
mainly to fix the notation, and then we show that we can perform some
modifications to make it parameter dependent. We refer
to \cite{bai2002krylov} for a discussion of the relation between the Lanczos 
process and (nonparametric) model reduction.

\subsection{The Lanczos method}
\label{sec:lanczos}

The Lanczos iteration, for a symmetric
$n \times n$
matrix $A$, can be summarized as follows. 
We construct an orthogonal basis $W_m$
for the subspace $\mathcal K_{m}(A, v) \subseteq \mathbb R^{n}$
defined as: 
\[
  \mathcal K_{m}(A, v) := \mathrm{span}(v, Av, \ldots, A^{m-1} v ). 
\]
Unless a breakdown occurs, this space has dimension $m$,
and the basis $W_m$ is represented as a $n \times m$ orthogonal matrix
(that is, $W_m^T W_m = I_m$). Given the basis, one then computes the projection
$T_m = W_m^T A W_m$. The matrix $T_m$ is
tridiagonal, and its eigenvalues (known as Ritz values) are
good approximations of the extremal eigenvalues of $A$ even when
$m \ll n$. This procedure is appealing for several reasons. 
\begin{enumerate}[(i)]
\item Computation of the space $W_m$ only requires matrix-vector
  multiplications, and therefore it is easy to exploit the sparsity or,
  more generally, the structure of $A$.
\item The orthogonalization of the basis requires only scalar products,
  which can be computed very efficiently if $m$ does not increase
  too much.
\item The algorithm can be carried out iteratively, i.e., 
  the optimal value of $m$ does not need to be known a priori, 
  and $T_m$ can be built incrementally. In our framework, 
  we stop the iteration when the error on the computed 
  eigenvalues is guaranteed to be smaller than a certain
  relative threshold $\tau$ (see \cite{demmel1997applied} for a classical
  reference on the stopping criterion of Lanczos for computing eigenvalues). 
  The actual value of $\tau$ is reported in Section~\ref{sec:examples}. 
\end{enumerate}

When the matrix $A$ arises from a FE
discretization of a PDE involving only local operators
and elements with compact support and few
overlappings, the number of nonzero elements is only $\mathcal O(n)$;
hence, the matrix vector
multiplication can be carried out with linear complexity. Moreover, 
the matrix usual has a band or multiband structure, and using good 
sparse solvers or appropriate preconditioners we can solve linear
systems in $\mathcal O(n)$ time. 
Therefore, the smallest eigenvalue of $A$ can be extracted applying 
Lanczos to $A^{-1}$. 
Since we are interested in modal analysis of structures discretized by FE,
we need
to approximate the smallest eigenvalues of a pencil
$K - \lambda M$, with $K$ and $M$ symmetric
positive definite; 
the same technique to extract the smallest eigenvalues can be applied
(implicitly) to the symmetric matrix $M^{\frac 1 2} K^{-1} M^{\frac 1 2}$,
obtaining: 
\[
  T_m = W_m^T M^{\frac 1 2} K^{-1} M^{\frac 1 2} W_m = U_m^T M K^{-1} M U_m, \qquad
  U_m = M^{-\frac 1 2} W_m. 
\]
It is immediately verifiable that $U_m$ contains a basis
of the same subspace spanned by $W_m$, but this matrix is
$M$-orthogonal (that is, orthogonal with respect to the inner
product defined by $M$), so $U_m^T M U_m = I_m$ \cite{ericsson1980spectral}.

\begin{remark}
  In the above framework, there is no need to explicitly compute the square
  root of the positive definite matrix $M$, which would be a very
  expensive operation. The matrix $T_m$ is implicitly obtained 
  using the matrix-vector product with $M K^{-1} M$ and 
  re-orthogonalization by the scalar-product induced by $M$, 
  computing the basis $U_{m}$ directly. 
\end{remark}

However, using this procedure as a black-box to evaluate the
objective function in an optimization method can easily become 
too onerous: even if the complexity is only linear, the size of the problem can make
this operation unpractical (we often deal with $n \approx 10^6$ or
even more in finite element models).

Therefore, we wish to investigate the use of the Lanczos process
to build a local parametric reduced model for the case when $K$ and
$M$
are not constant matrices, but depend on $\ell$ parameters
$x_1, \ldots, x_\ell$. 

\subsection{Parametric finite element models}

Let us assume that a finite element model
depending on a variable $\mathbf x \in \mathbb R^{\ell}$ is given. These parameters
$\mathbf x$
may encode different properties of the system (our method does not
make any assumption about what they mean --- but only on
their algebraic properties). Most of the time, these will
be some materials' physical characteristics, such as 
Young's modulus, or mass density. In almost all the cases of 
interest, the dependency is \emph{linear} in $\mathbf x$. However, 
for our discussion we just assume it to be smooth, i.e., at least
$\mathcal C^2$.

\subsection{A first-order approximation to the updated Lanczos projection}

In this section we develop a strategy that, given a certain 
initial value $\mathbf x^{(0)}$
for the model parameters, builds a local model to be
used in the trust-region scheme presented in Section~\ref{sec:trustregion}. 

Let us denote with $F(\mathbf x)$ the matrix valued function that,
for a given choice of the parameters $\mathbf x$, returns the
tridiagonal matrix obtained running the Lanczos process
on $M^{\frac 1 2}(\mathbf x) K(\mathbf x)^{-1} M^{\frac 1 2}(\mathbf x)$.

We shall approximate $F(\mathbf x)$ in a neighborhood of $\mathbf x^{(0)}$. 
This approximation calls for two steps: first, we fix
the subspace used for the projection, as described in
Lemma~\ref{lem:f0}, obtaining an approximation $F_0(\mathbf x)$ of $F(\mathbf x)$. 
Then, we show how to further approximate this function in order to make
it cheap to evaluate. Lemma~\ref{lem:objectivef} describes how to
combine these approximations to obtain a 
 first-order correct 
local model  for the trust-region scheme. 

\begin{remark}
    The outcome of the Lanczos process depends on the initial vector
    chosen for the scheme. Here and in the following, we assume that this 
    vector is fixed whenever we keep $\mathbf x^{(0)}$ unchanged. This makes
    the subspace generated by the Lanczos projection unique.
\end{remark}

\begin{lemma} \label{lem:f0}
	Let $F(\mathbf x) = U_m(\mathbf x) M(\mathbf x) K^{-1}(\mathbf x) M(\mathbf x) U_m(\mathbf x)$ be the matrix-valued function that 
	associates the point $\mathbf x$ with the 
	Lanczos projection of the pencil $K(\mathbf x) - \lambda M(\mathbf x)$ previously defined. 	
	Then, there exists a matrix-valued function $U_{m,0}(\mathbf x)$ such
	that, in a neighborhood $\mathcal N$ of $\mathbf x^{(0)}$, 
	\begin{enumerate}
        \item For any $\mathbf x \in \mathcal N$ the 
          matrix-valued function $U_{m,0}(\mathbf x)$ is an
          $M(\mathbf x)$-orthogonal
          basis for the column-span of $U_m(\mathbf x^{(0)})$.
        \item The function $F_0(\mathbf x)$ defined as
        \[
          F_0(\mathbf x) := U_{m,0}(\mathbf x)^T M(\mathbf x) K(\mathbf x)^{-1} M(\mathbf x) U_{m,0}(\mathbf x)
        \]
        is  $\mathcal C^2(\mathcal N)$ 
        and $F_0(\mathbf x^{(0)}) = F(\mathbf x^{(0)})$, that 
        is it is a zeroth-order approximation of $F(\mathbf x)$. 
	\end{enumerate}
\end{lemma}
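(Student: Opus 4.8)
The plan is to construct $U_{m,0}(\mathbf x)$ explicitly by taking the fixed column-span $V := \operatorname{span} U_m(\mathbf x^{(0)})$ — a fixed $m$-dimensional subspace of $\mathbb R^n$, independent of $\mathbf x$ — and then re-orthogonalizing a fixed basis of $V$ with respect to the varying inner product $M(\mathbf x)$. Concretely, let $U_\star := U_m(\mathbf x^{(0)})$ be the fixed $n\times m$ matrix whose columns span $V$. For $\mathbf x$ near $\mathbf x^{(0)}$, the $m\times m$ matrix $U_\star^T M(\mathbf x) U_\star$ is symmetric positive definite (it equals $I_m$ at $\mathbf x^{(0)}$ by $M(\mathbf x^{(0)})$-orthogonality, and positive-definiteness is an open condition preserved in a neighborhood). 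I would then set
\[
U_{m,0}(\mathbf x) := U_\star \, G(\mathbf x)^{-\frac 1 2}, \qquad G(\mathbf x) := U_\star^T M(\mathbf x) U_\star,
\]
where $G(\mathbf x)^{-\frac 1 2}$ denotes the inverse of the symmetric positive definite square root of $G(\mathbf x)$. This choice makes assertion (1) immediate: the columns of $U_{m,0}(\mathbf x)$ clearly span $V = \operatorname{col} U_\star$ (right-multiplication by an invertible matrix), and $U_{m,0}(\mathbf x)^T M(\mathbf x) U_{m,0}(\mathbf x) = G(\mathbf x)^{-\frac 1 2} G(\mathbf x) G(\mathbf x)^{-\frac 1 2} = I_m$, so the basis is $M(\mathbf x)$-orthogonal.

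For assertion (2), the evaluation $F_0(\mathbf x^{(0)}) = F(\mathbf x^{(0)})$ is the easy half: at $\mathbf x^{(0)}$ we have $G(\mathbf x^{(0)}) = I_m$, hence $U_{m,0}(\mathbf x^{(0)}) = U_\star = U_m(\mathbf x^{(0)})$, and the defining formula for $F_0$ collapses exactly to the defining formula for $F$ at that point. The remaining claim is the $\mathcal C^2(\mathcal N)$ regularity of $F_0$, which I would obtain by tracking smoothness through each factor in the composition. The map $\mathbf x \mapsto M(\mathbf x)$ is $\mathcal C^2$ by the standing assumption (Section~\ref{sec:reducedmodel}); likewise $\mathbf x \mapsto K(\mathbf x)$ is $\mathcal C^2$, and since $K(\mathbf x)$ stays invertible near $\mathbf x^{(0)}$ the map $\mathbf x \mapsto K(\mathbf x)^{-1}$ is $\mathcal C^2$ (matrix inversion is analytic on the open set of invertible matrices, and composition of $\mathcal C^2$ maps is $\mathcal C^2$). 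The single delicate factor is $G(\mathbf x)^{-\frac 1 2}$.

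**The main obstacle is establishing that $\mathbf x \mapsto G(\mathbf x)^{-\frac 1 2}$ is $\mathcal C^2$.** The cleanest route is to note that the inverse matrix square root $S \mapsto S^{-\frac 1 2}$ is a real-analytic function on the open cone of symmetric positive definite matrices: it can be written via the holomorphic functional calculus as $S^{-\frac 1 2} = \frac{1}{2\pi i}\oint_\Gamma z^{-\frac 1 2}(zI - S)^{-1}\,dz$ for a contour $\Gamma$ enclosing the (positive) spectrum of $S$, and this integral representation shows smooth dependence on the entries of $S$. Since $G(\mathbf x)$ is a $\mathcal C^2$ map into this cone (it is symmetric, it is $\mathcal C^2$ because $M(\mathbf x)$ is, and it is positive definite on $\mathcal N$), the composition $\mathbf x \mapsto G(\mathbf x)^{-\frac 1 2}$ is $\mathcal C^2$ on $\mathcal N$. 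One may alternatively invoke the implicit function theorem applied to the matrix equation $Y^2 = G(\mathbf x)^{-1}$, whose derivative (the Sylvester-type operator $Y_0 H + H Y_0$) is invertible at a positive definite $Y_0$; either argument suffices. With this in hand, $F_0(\mathbf x)$ is a finite product and transpose of $\mathcal C^2$ matrix-valued functions, hence $\mathcal C^2$, completing the proof.
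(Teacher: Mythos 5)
Your proposal is correct and follows essentially the same route as the paper: you construct $U_{m,0}(\mathbf x) = U_m(\mathbf x^{(0)})\, G(\mathbf x)^{-\frac 12}$ with $G(\mathbf x) = U_m(\mathbf x^{(0)})^T M(\mathbf x)\, U_m(\mathbf x^{(0)})$, which is exactly the paper's $Z(\mathbf x)$-based construction, and you justify regularity by the local analyticity of the inverse matrix square root near the identity, just as the paper does. Your write-up merely adds more detail (the contour-integral/implicit-function-theorem justification for smoothness of $S \mapsto S^{-\frac 12}$, and the explicit check that $K(\mathbf x)^{-1}$ is $\mathcal C^2$), which the paper leaves implicit.
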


\begin{proof}
    By construction, the basis generated by the Lanczos method
    at $\mathbf x^{(0)}$ returns an $M(\mathbf x^{(0)})$-orthogonal
    basis $U_{m}(\mathbf x^{(0)})$. 
    We have 
    \[
      U_m(\mathbf x^{(0)})^T M(\mathbf x) U_m(\mathbf x^{(0)}) =  
        Z(\mathbf x), \qquad 
        Z(\mathbf x^{(0)})  = I, 
    \]
    and 
    since $M(\mathbf x)$ is of class $\mathcal C^2$, the same can be said of 
    the symmetric matrix $Z(\mathbf x)$. Therefore, we can define 
    $U_{m,0}(\mathbf x) := U_m(\mathbf x^{(0)}) Z(\mathbf x)^{-\frac 12}$. 
    For $\mathbf x$ close enough to $\mathbf x^{(0)}$, the value 
    of $Z(\mathbf x)$ is bounded away from singular matrices (recall that 
    $Z(\mathbf x^{(0)}) = I$), so the inverse square-root is locally analytic
    and therefore $U_{m,0}(\mathbf x)$ is at least $\mathcal C^2(\mathcal N)$
    in a neighborhood $\mathcal N$. Direct substitution yields $U_{m,0}^T(\mathbf x) M(\mathbf x) U_{m,0}(\mathbf x) = I$ 
    and $F_0(\mathbf x^{(0)}) = F(\mathbf x^{(0)})$, 
    as requested. 
\end{proof}

The idea behind the approximation of Lemma~\ref{lem:f0} is 
to obtain the eigenvalues of 
$F_{0}(\mathbf x)$ by a subspace projection
method, where the subspace is approximated by choosing the one
obtained by running Lanczos 
for the parameters at $\mathbf x^{(0)}$ instead of the one 
at $\mathbf x$. If the two values are close, this subspace
is still ``good enough'' to provide accurate approximations of the 
spectrum. In fact a similar strategy underlies
several methods known as \emph{Krylov subspace recycling}
\cite{parks2006recycling,soodhalter2014krylov}. These techniques find
applications in solving sequences of shifted linear systems, which is 
a step required, for instance, in model reduction algorithms. 

The definition of $F_0(\mathbf x)$, as is, does not make it 
particularly easier to evaluate with respect to $F(\mathbf x)$. 
In Section~\ref{sec:firstorder}, we will show how to approximate
$F_0(\mathbf x)$ at the first-order in a way that makes its 
computation very efficient. The next result justifies this approach.

\begin{lemma} \label{lem:firstorderf}
    Let $\hat F_0(\mathbf x)$ be any locally $\mathcal C^2$
    first-order symmetric positive definite
    approximation of $F_0(\mathbf x)$, and let $\hat\Lambda_s(\cdot)$ be 
    the function that associates a symmetric 
    matrix with the inverse of its largest $s$ eigenvalues. If the eigenvalues 
    at $\mathbf x^{(0)}$ of $F(\mathbf x^{(0)})$ are all 
    distinct then
    \[
      \hat\Lambda_s(F(\mathbf x)) = \hat\Lambda_s(\hat F_0(\mathbf x)) + 
      O(\norm{\mathbf x - \mathbf x^{(0)}}^2), \qquad 
      \mathbf x \in \mathcal N, 
    \]
    where $\mathcal N$ is an appropriate neighborhood of $\mathbf x^{(0)}$. 
\end{lemma}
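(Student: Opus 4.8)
The plan is to factor the claim through the exact, smooth eigenvalues of the pencil and to compare \emph{both} $F(\mathbf x)$ and $\hat F_0(\mathbf x)$ against this common reference. Writing $A(\mathbf x) := M^{\frac12}(\mathbf x)K(\mathbf x)^{-1}M^{\frac12}(\mathbf x)$, let $\mu_1(\mathbf x)\ge\ldots\ge\mu_s(\mathbf x)$ denote its $s$ largest eigenvalues (the inverses of the $s$ smallest eigenvalues of $K-\lambda M$) with associated eigenvectors $q_i(\mathbf x)$. Since $K$ and $M$ are $\mathcal C^2$ and positive definite and the $s$ relevant eigenvalues of $F(\mathbf x^{(0)})$ — hence of $A(\mathbf x^{(0)})$, which they approximate — are simple, analytic perturbation theory makes $\mu_i(\mathbf x)$ of class $\mathcal C^2$ and $q_i(\mathbf x)$ of class $\mathcal C^1$ on a neighbourhood $\mathcal N$ of $\mathbf x^{(0)}$. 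Because $\hat\Lambda_s$ merely applies $t\mapsto 1/t$ to the $s$ largest eigenvalues, and these stay positive and bounded away from $0$ on $\mathcal N$ by positive definiteness, it suffices to prove the second-order estimate for the largest $s$ eigenvalues themselves and then compose with the smooth inverse map, which preserves the $O(\norm{\mathbf x-\mathbf x^{(0)}}^2)$ remainder.

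First I would bound $\hat F_0$ against $F_0$. By hypothesis $\hat F_0$ is a first-order approximation of $F_0$, so the two matrix functions agree, together with their first derivatives, at $\mathbf x^{(0)}$; both are $\mathcal C^2$ (the latter by Lemma~\ref{lem:f0}). Since $F_0(\mathbf x^{(0)})=F(\mathbf x^{(0)})$ has simple eigenvalues, each such eigenvalue is a $\mathcal C^2$ function of the matrix entries in a neighbourhood of $F_0(\mathbf x^{(0)})$, whence two $\mathcal C^2$ matrix functions agreeing to first order produce eigenvalues whose difference is $O(\norm{\mathbf x-\mathbf x^{(0)}}^2)$. This gives $\lambda_i(\hat F_0(\mathbf x)) = \lambda_i(F_0(\mathbf x)) + O(\norm{\mathbf x-\mathbf x^{(0)}}^2)$ for $i=1,\ldots,s$.

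The core step is the comparison of $F_0(\mathbf x)$ and $F(\mathbf x)$, which I would route through $\mu_i(\mathbf x)$. Both matrices are Rayleigh--Ritz projections of $A(\mathbf x)$: $F(\mathbf x)$ onto the Lanczos trial space computed \emph{at} $\mathbf x$, and $F_0(\mathbf x)$ onto the trial space spanned by $U_{m,0}(\mathbf x)$, a $\mathcal C^1$ family that at $\mathbf x^{(0)}$ reduces to the Lanczos space of Lemma~\ref{lem:f0}. The key observation is that at $\mathbf x^{(0)}$ this space captures the wanted eigenvectors, so (idealising Lanczos as converged, or carrying a fixed $O(\tau)$ term) $q_i(\mathbf x^{(0)})$ lies in it; since $q_i(\mathbf x)$ and the trial space are both $\mathcal C^1$, the angle $\sin\angle(q_i(\mathbf x),\operatorname{span}U_{m,0}(\mathbf x)) = O(\norm{\mathbf x-\mathbf x^{(0)}})$. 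By the stationarity of the Rayleigh quotient at an eigenvector — the classical quadratic Rayleigh--Ritz bound under a spectral gap, see \cite{demmel1997applied} — a trial-space error of order $\norm{\mathbf x-\mathbf x^{(0)}}$ perturbs the associated Ritz value only to second order, so $\lambda_i(F_0(\mathbf x)) = \mu_i(\mathbf x) + O(\norm{\mathbf x-\mathbf x^{(0)}}^2)$. The same bound applied to the Lanczos space at $\mathbf x$, which also captures $q_i(\mathbf x)$, gives $\lambda_i(F(\mathbf x)) = \mu_i(\mathbf x) + O(\norm{\mathbf x-\mathbf x^{(0)}}^2)$; subtracting yields $\lambda_i(F(\mathbf x)) = \lambda_i(F_0(\mathbf x)) + O(\norm{\mathbf x-\mathbf x^{(0)}}^2)$.

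Combining the estimates of the last two paragraphs by the triangle inequality gives $\lambda_i(F(\mathbf x)) = \lambda_i(\hat F_0(\mathbf x)) + O(\norm{\mathbf x-\mathbf x^{(0)}}^2)$ for $i=1,\ldots,s$, and composing with the smooth map $t\mapsto 1/t$ (legitimate by the positivity noted above) delivers the claimed identity for $\hat\Lambda_s$. I expect the third paragraph to be the main obstacle: making the quadratic Rayleigh--Ritz estimate rigorous needs the simplicity hypothesis both to supply a spectral gap and to secure the $\mathcal C^1$ dependence of $q_i$, and it requires a clean treatment of the idealisation that the Lanczos space reproduces the wanted eigenpairs at $\mathbf x^{(0)}$; by contrast, the first-order matrix step and the final scalar composition are routine.
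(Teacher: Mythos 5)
Your proposal is correct (up to the Lanczos-convergence idealisation you flag explicitly), and it takes a genuinely different --- in fact more complete --- route than the paper. The paper's entire proof is essentially your second paragraph: simple eigenvalues of a symmetric matrix depend analytically on its entries, hence composing $\hat\Lambda_s(\cdot)$ with the smooth matrix functions $F(\mathbf x)$ and $\hat F_0(\mathbf x)$ is claimed to give the result. That composition argument legitimately handles the pair $(F_0,\hat F_0)$, which agree to first order \emph{as matrices} by hypothesis; but it is silent on the pair $(F,F_0)$, which by Lemma~\ref{lem:f0} agree only to zeroth order --- indeed $F(\mathbf x)$ is tridiagonal while $F_0(\mathbf x)$ is generally full, so first-order agreement of the matrices is false for $\mathbf x \neq \mathbf x^{(0)}$, and only their leading eigenvalues can match to first order. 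Your third paragraph supplies exactly this missing mechanism: both matrices are Rayleigh--Ritz projections of the same operator $A(\mathbf x)$ onto $\mathcal C^1$ families of trial spaces which, at $\mathbf x^{(0)}$, contain the wanted eigenvectors, so by stationarity of the Rayleigh quotient an $O(\norm{\mathbf x-\mathbf x^{(0)}})$ motion of the trial space perturbs the relevant Ritz values only at second order, and both sets of Ritz values track the exact eigenvalues $\mu_i(\mathbf x)$ up to $O(\norm{\mathbf x-\mathbf x^{(0)}}^2)$. What each approach buys: the paper's proof is two lines but, read literally, conflates ``first-order approximation of $F_0$'' with ``first-order approximation of $F$'', i.e.\ it assumes at the matrix level the very statement that must be proven at the eigenvalue level; your argument makes the actual content explicit, at the price of needing the spectral-gap/simplicity hypothesis to run the quadratic Rayleigh--Ritz bound and of carrying an $O(\tau)$ term if Lanczos is not idealised as exact (note this term also pollutes the gradient matching, contributing $O(\tau\norm{\mathbf x-\mathbf x^{(0)}})$, so the idealisation is genuinely needed for the clean second-order statement). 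Two small points to tighten: in the $A(\mathbf x)$-picture the trial space underlying $F_0(\mathbf x)$ is $M^{1/2}(\mathbf x)\,\mathrm{span}\,U_m(\mathbf x^{(0)})$ rather than $\mathrm{span}\,U_{m,0}(\mathbf x)$ itself (harmless, since it is still $\mathcal C^1$ in $\mathbf x$ and reduces to the Lanczos space at $\mathbf x^{(0)}$), and you should fix the Krylov dimension $m$ and exclude breakdown so that the Lanczos space at $\mathbf x$ is itself a $\mathcal C^1$ family.
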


\begin{proof}
    When the eigenvalues of a symmetric matrix are all distinct, they
    locally depend analytically on the entries of the matrix (see, for instance, 
    \cite{kato2013perturbation,sun1990multiple}). Since the
    matrices involved are all positive definite, the same holds for
    their inverses $\lambda_{i}^{-1}$. 
     Therefore, the result follows by composing
    $\hat\Lambda_s(\cdot)$ with the functions $F(\mathbf x)$ and 
    $\hat F_0(\mathbf x)$. 
\end{proof}

\begin{remark}
    In the event two or more eigenvalues match the situation is slightly
    more problematic, 
    since the dependency of the eigenvalues is only continuous in general, 
    and although partial derivatives exist, 
    a higher-order smoothness in the eigenvalue functions cannot be claimed \cite{sun1990multiple}. 
    For simplicity, we restrict our attention to the case where the eigenmodes are
    separated enough not to collide while optimizing the
    parameters. This is the
    case in all practical applications analyzed in Section~\ref{sec:examples}. 
    However, a greater generality could be treated by replacing
    eigenvectors with deflating subspaces, at the price of a 
    more involved treatment. 
\end{remark}

\subsubsection{First-order expansion of the projection}
\label{sec:firstorder}

In view of Lemma~\ref{lem:firstorderf}, in this Section we aim
at constructing an approximation to $F_0(\mathbf x)$ that matches
at the first-order, and that is numerically undemanding to evaluate. More
precisely, we aim at complexity $\mathcal O(m^3)$, 
where $m$ is the dimension of the Krylov subspace. Let us write
the value of $K(\mathbf x)$ and $M(\mathbf x)$ as small perturbations
of their values at $\mathbf x^{(0)}$:
\[
  K(\mathbf x) = K(\mathbf x^{(0)}) + \delta K(\mathbf x), \qquad
  M(\mathbf x) = M(\mathbf x^{(0)}) + \delta M(\mathbf x). 
\]
We can expand the above expressions to the
first-order, which yields
\begin{equation}
  \label{eq:secondorderK}
  K(\mathbf x) = K(\mathbf x^{(0)}) +
  \sum_{j = 1}^{\ell} (x_j - x^{(0)}_j) \frac{\partial K(\mathbf x^{(0)})}{\partial x_j}
  + O(\norm{\mathbf x - \mathbf x^{(0)}}^2),
\end{equation}
and the analogous formula for $M(\mathbf x)$. Let us assume that we are
given an $M(\mathbf x)$-orthogonal basis $U_{m,0}(\mathbf x)$
spanning the same subspace of $U_m(\mathbf x^{(0)})$ that we have computed
with the Krylov projection method at $\mathbf x^{(0)}$. In this case,
 we can compute the new
projected counterpart of $M^{\frac 12}(\mathbf x) K(\mathbf x)^{-1} M^{\frac 12}(\mathbf x)$ as follows:
\begin{equation}
  \label{eq:projection}
  F_{0}(\mathbf x) = U_{m,0}^T(\mathbf x) M(\mathbf x) K(\mathbf x)^{-1} M(\mathbf x) U_{m,0}(\mathbf x).  
\end{equation}
In order to derive a first-order expansion for the above formula we need
first-order expansions for all the terms involved. However, we still do
not have such an expression for the inverse of $K(\mathbf x)$. 
For $\mathbf x$ sufficiently close to $\mathbf x^{(0)}$ 
we can write the Neumann expansion:
\[
  K(\mathbf x)^{-1} = (I + K(\mathbf x^{(0)})^{-1} \delta K(\mathbf x))^{-1} K(\mathbf x^{(0)})^{-1} =
  \sum_{j = 0}^{\infty} (-1)^j \left[ K(\mathbf x^{(0)})^{-1} \delta K(\mathbf x) \right]^j K(\mathbf x^{(0)})^{-1}. 
\]
Relying on Equation~(\ref{eq:secondorderK}), and dropping the second-order
terms yields
\begin{align*}
  K(\mathbf x)^{-1} = K(\mathbf x^{(0)})^{-1} &-
  \sum_{j = 1}^\ell (x_j - x_j^{(0)}) K(\mathbf x^{(0)})^{-1} \frac{\partial K(\mathbf x^{(0)})}{\partial x_j} K(\mathbf x^{(0)})^{-1}
+ O(\norm{\mathbf x - \mathbf x^{(0)}}^2), 
\end{align*}
which in turn can be rewritten in the following form:
\begin{equation}
  \label{eq:kexpansion}
K(\mathbf x)^{-1} = K(\mathbf x^{(0)})^{-1} \left( I - \sum_{j = 1}^\ell (x_j - x_j^{(0)}) \frac{\partial K(\mathbf x^{(0)})}{\partial x_j} K(\mathbf x^{(0)})^{-1} \right) + O(\norm{\mathbf x - \mathbf x^{(0)}}^2). 
\end{equation}
Equation~(\ref{eq:kexpansion}) has the desirable property that all the 
inverses of $K(\mathbf x)$ appear evaluated at $\mathbf x^{(0)}$,
and the dependency on $\mathbf x$ is linear.

We now have all the ingredients to derive a cheap formula for computing $F_{0}(\mathbf x)$, 
by substituting
the expansions in Equation~(\ref{eq:projection}). This leads to:
\begin{equation} \label{eq:projected-expanded}
  \begin{split}
  F_{0}(\mathbf x) &= U_{m,0}^T(\mathbf x) M(\mathbf x^{(0)}) K(\mathbf x^{(0)})^{-1} M(\mathbf x^{(0)}) U_{m,0}(\mathbf x) \\
                   &+ \sum_{j = 1}^\ell (x_j - x_j^{(0)}) U_{m,0}(\mathbf x)^T G_j(\mathbf x^{(0)}) U_{m,0}(\mathbf x) + O(\norm{\mathbf x - \mathbf x^{(0)}}^2),
\end{split}
\end{equation}
where $G_j(\mathbf x^{(0)})$ is obtained by grouping together all the first-order contributions, that is, 
\begin{align*}
  G_j(\mathbf x^{(0)}) &= \frac{\partial M(\mathbf x^{(0)})}{\partial x_j} K(\mathbf x^{(0)})^{-1} M(\mathbf x^{(0)}) 
  - M(\mathbf x^{(0)}) K(\mathbf x^{(0)})^{-1}\frac{\partial K(\mathbf x^{(0)})}{\partial x_j} K(\mathbf x^{(0)})^{-1} M(\mathbf x^{(0)}) \\
  &+ M(\mathbf x^{(0)}) K(\mathbf x^{(0)})^{-1} \frac{\partial M(\mathbf x^{(0)})}{\partial x_j}. 
\end{align*}
The terms $G_j(\mathbf x^{(0)})$ are 
large matrices, and computing them explicitly would be
unfeasible. However,
by observing Equation~(\ref{eq:projected-expanded}) closely, 
we immediately notice that
we do not need the complete matrices, but just their projected counterparts
$\hat G_j(\mathbf x) := U_{m,0}(\mathbf x)^T G_j(\mathbf x^{(0)}) U_{m,0}(\mathbf x)$,
which are much smaller.
Moreover, in the definition of $G_j(\mathbf x^{(0)})$ 
the only matrix
that appears as an inverse is $K(\mathbf x^{(0)})$. In our Lanczos implementation
we use a direct sparse solver so, after the projection, we already have
a sparse Cholesky factorization of $K(\mathbf x^{(0)})$, and we compute
$\hat G_j(\mathbf x^{(0)})$ 
at the cost of some extra back-substitutions and
sparse matvec products (which have a comparable cost).
Similar savings could be obtained recycling the preconditioner
when using an iterative method. 

However, the same computational cost would be needed to calculate $\hat G_j$ 
at $\mathbf x \neq \mathbf x^{(0)}$. We would like to avoid this extra
computation. To this end, observe that according to the proof 
of Lemma~\ref{lem:f0}, we have $U_{m,0}(\mathbf x) = U_m(\mathbf x^{(0)}) Z(\mathbf x)^{- \frac 12}$. 
Therefore, we can write
the formulas to compute $\hat G_j(\mathbf x)$
as follows: 
\[
  \hat G_j(\mathbf x) = Z(\mathbf x)^{-\frac 12} U_m(\mathbf x^{(0)})^T G_j(\mathbf x^{(0)}) U_m(\mathbf x^{(0)}) Z(\mathbf x)^{-\frac 12}, \qquad
\]
which shifts the problem to that of computing
$Z(\mathbf x) = U_m(\mathbf x^{(0)})^T M(\mathbf x) U_m(\mathbf x^{(0)})$. As in the previous equations, we can obtain
a first-order approximation of $Z(\mathbf x)$ at any $\mathbf x$
with a first-order
truncation by precomputing some small projected matrices at $\mathbf x^{(0)}$:
\begin{align*}
  \hat Z(\mathbf x) &:= U_m(\mathbf x^{(0)})^T \left(
    M(\mathbf x^{(0)}) + \sum_{j = 1}^\ell (x_j - x_j^{(0)}) \frac{\partial M(\mathbf x^{(0)})}{\partial x_j} \right)
    U_m(\mathbf x^{(0)}) \\
    &= I + \sum_{j}^\ell (x_j - x_j^{(0)}) U_m(\mathbf x^{(0)}) ^T\frac{\partial M(\mathbf x^{(0)})}{\partial x_j} U_m(\mathbf x^{(0)}).
 \end{align*}

 We can now derive a formula for an approximation $F_{m,0}(\mathbf x)$ 
 of $F_0(\mathbf x)$
by combining all these considerations:
 \begin{equation}\label{eq:Tapp}
  F_{m,0}(\mathbf x) = \hat Z(\mathbf x)^T T_m(\mathbf x_0) \hat Z(\mathbf x) + \sum_{j = 1}^\ell (x_j - x_j^{(0)}) \hat G_j(\mathbf x) 
 \end{equation}
 According the the previous remarks, $\hat Z(\mathbf x)$, 
 and $\hat G_{j}(\mathbf x)$, 
 can be computed cheaply in 
 $\mathcal O(m^3)$ flops, so with a computational cost that is 
 \emph{independent} of the size of the original problem. 
In view of this analysis, we arrive at the following result. 

\begin{lemma} \label{lem:stage2}
    The matrix-valued function $F_{m,0}(\mathbf x)$ 
    defined in \eqref{eq:Tapp} is a first-order approximation
    of $F_0(\mathbf x)$ at $\mathbf x = \mathbf x^{(0)}$. More precisely, 
    there exists a neighborhood $\mathcal N$ containing $\mathbf x^{(0)}$
    such that
    \[
      F_{0}(\mathbf x) = F_{m,0}(\mathbf x) + O(\norm{\mathbf x - \mathbf x^{(0)}}^2), \qquad 
      \mathbf x \in \mathcal N.
    \]
\end{lemma}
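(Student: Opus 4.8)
The plan is to prove that $F_0$ and $F_{m,0}$ have the same Taylor polynomial of degree one at $\mathbf x^{(0)}$, so that their difference is $O(\norm{\mathbf x - \mathbf x^{(0)}}^2)$. Both functions are $\mathcal C^2$ on a neighborhood $\mathcal N$ of $\mathbf x^{(0)}$ --- for $F_0$ this is guaranteed by Lemma~\ref{lem:f0}, while $F_{m,0}$ is a fixed polynomial in the entries of $\hat Z(\mathbf x)$ and hence trivially smooth --- so matching the constant and linear terms is exactly the required conclusion. The natural entry point is Equation~\eqref{eq:projected-expanded}: it was derived by inserting the Neumann expansion \eqref{eq:kexpansion} of $K(\mathbf x)^{-1}$ into the projection \eqref{eq:projection}, and it already quarantines every contribution beyond first order inside an explicit $O(\norm{\mathbf x - \mathbf x^{(0)}}^2)$ term. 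It therefore suffices to recast its two surviving summands into the shape of \eqref{eq:Tapp}, modulo a further quadratic error.

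First I would simplify the leading summand. Using the factorization $U_{m,0}(\mathbf x) = U_m(\mathbf x^{(0)}) Z(\mathbf x)^{-\frac12}$ established in the proof of Lemma~\ref{lem:f0}, together with the identity $U_m(\mathbf x^{(0)})^T M(\mathbf x^{(0)}) K(\mathbf x^{(0)})^{-1} M(\mathbf x^{(0)}) U_m(\mathbf x^{(0)}) = T_m(\mathbf x^{(0)})$, the first summand of \eqref{eq:projected-expanded} collapses to $Z(\mathbf x)^{-\frac12} T_m(\mathbf x^{(0)}) Z(\mathbf x)^{-\frac12}$. For the sum over $j$ I would invoke the explicit scalar prefactor $(x_j - x_j^{(0)})$ attached to each term: since this factor is already first order, replacing the matrix $\hat G_j(\mathbf x)$ by any first-order-accurate surrogate perturbs the summand only at second order, so the sum $\sum_j (x_j - x_j^{(0)}) \hat G_j(\mathbf x)$ of \eqref{eq:Tapp} may be carried over verbatim.

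The delicate step is the first-order treatment of the leading block $Z(\mathbf x)^{-\frac12} T_m(\mathbf x^{(0)}) Z(\mathbf x)^{-\frac12}$. Because $Z(\mathbf x^{(0)}) = I$ and $Z$ is $\mathcal C^2$, the inverse square root $Z \mapsto Z^{-\frac12}$ is analytic in a neighborhood of the identity, so I would replace each factor $Z(\mathbf x)^{-\frac12}$ by its first-order truncation $\hat Z(\mathbf x)$, obtaining $\hat Z(\mathbf x)^T T_m(\mathbf x^{(0)}) \hat Z(\mathbf x)$ up to a quadratic remainder. The bookkeeping here is the crux of the argument: one has to confirm that the linear coefficients produced by differentiating the inverse-square-root factor coincide with those encoded in $\hat Z(\mathbf x)$, taking due care of the $-\tfrac12$ factor generated by the derivative of $Z^{-\frac12}$ at the identity. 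Verifying this agreement of first-order terms is the main obstacle; everything discarded along the way is visibly quadratic in $\mathbf x - \mathbf x^{(0)}$.

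Finally I would reassemble the two blocks, recovering exactly \eqref{eq:Tapp}, and control all remainders uniformly on a small closed ball around $\mathbf x^{(0)}$, where the $\mathcal C^2$ regularity supplied by Lemma~\ref{lem:f0} makes the second-order Taylor bounds uniform. This gives $F_0(\mathbf x) = F_{m,0}(\mathbf x) + O(\norm{\mathbf x - \mathbf x^{(0)}}^2)$ on $\mathcal N$, which is the assertion of the lemma.
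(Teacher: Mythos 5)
Your plan retraces what is in effect the paper's own justification: the lemma is stated there ``in view of'' the derivation of Section~\ref{sec:firstorder}, i.e., exactly the chain you follow (Neumann expansion, Equation~\eqref{eq:projected-expanded}, the factorization $U_{m,0}(\mathbf x) = U_m(\mathbf x^{(0)})\, Z(\mathbf x)^{-\frac 12}$, and transporting the $\hat G_j$ sum verbatim thanks to its scalar first-order prefactor). Those parts of your argument are sound. The gap is at the step you yourself call the crux and then do not carry out: the claim that replacing each factor $Z(\mathbf x)^{-\frac 12}$ by ``its first-order truncation $\hat Z(\mathbf x)$'' reproduces the leading block of \eqref{eq:Tapp} up to a quadratic error. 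With the definition actually used in the paper, $\hat Z(\mathbf x) = I + \delta Z(\mathbf x)$ with $\delta Z(\mathbf x) = \sum_{j} (x_j - x_j^{(0)})\, U_m(\mathbf x^{(0)})^T \frac{\partial M(\mathbf x^{(0)})}{\partial x_j} U_m(\mathbf x^{(0)})$, so $\hat Z$ is the truncation of $Z$, \emph{not} of $Z^{-\frac 12}$, whose truncation is $I - \tfrac 12 \delta Z$. Writing $T_m := T_m(\mathbf x^{(0)})$, the bookkeeping you defer gives
\[
  Z(\mathbf x)^{-\frac 12}\, T_m\, Z(\mathbf x)^{-\frac 12}
  = T_m - \tfrac 12 \bigl(\delta Z\, T_m + T_m\, \delta Z\bigr) + O\bigl(\norm{\mathbf x - \mathbf x^{(0)}}^2\bigr),
\]
whereas
\[
  \hat Z(\mathbf x)^T\, T_m\, \hat Z(\mathbf x)
  = T_m + \bigl(\delta Z\, T_m + T_m\, \delta Z\bigr) + O\bigl(\norm{\mathbf x - \mathbf x^{(0)}}^2\bigr),
\]
so the two expressions disagree at first order by $\tfrac 32 (\delta Z\, T_m + T_m\, \delta Z)$. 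Since $T_m$ is symmetric positive definite, the Sylvester map $X \mapsto X T_m + T_m X$ is nonsingular, so this discrepancy vanishes identically only when $\delta Z \equiv 0$, i.e., only when $M$ does not depend on the free parameters; whenever a mass density is among them (as in the arch and dome experiments), the first-order terms genuinely differ.

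Consequently the proof cannot be closed as written; what your computation actually uncovers is an inconsistency between \eqref{eq:Tapp} and the derivation preceding it. The lemma does hold --- and your argument then goes through without further changes --- if the leading block of \eqref{eq:Tapp} is read as $\hat Z(\mathbf x)^{-\frac 12}\, T_m(\mathbf x^{(0)})\, \hat Z(\mathbf x)^{-\frac 12}$, i.e., with the truncation taken \emph{inside} the inverse square root (equivalently, taking as reduced model the generalized pencil with projected ``stiffness'' $T_m + \sum_j (x_j - x_j^{(0)}) U_m(\mathbf x^{(0)})^T G_j(\mathbf x^{(0)}) U_m(\mathbf x^{(0)})$ and ``mass'' $\hat Z(\mathbf x)$); this reading preserves the $\mathcal O(m^3)$ cost and matches the zeroth- and first-order terms of $F_0$, as one checks by substituting $\hat Z^{-\frac 12} = I - \tfrac 12 \delta Z + O(\norm{\mathbf x - \mathbf x^{(0)}}^2)$ into the first display. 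A complete write-up must either adopt that corrected reading and perform this verification explicitly, or restrict to parameter-independent mass matrices, where $\delta Z = 0$ and the two readings coincide. Asserting that the $-\tfrac 12$ factors ``coincide with those encoded in $\hat Z(\mathbf x)$'' is precisely the point at which the argument fails.
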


\subsubsection{Restarted Lanczos and other subspace iterations}

At this stage, at no time we have used
the property that $W_{m}(\mathbf x)$ spans a Krylov subspace
generated by $K(\mathbf x)^{-1}$. In fact, this is not a requirement at all. 
Any subspace $W_{m}(\mathbf x)$ generated through a subspace iteration
method will work equally well in this framework. 

In particular, we could rely on well-known large-scale eigenvalue
solver such as ARPACK \cite{lehoucq1998arpack}, which provides a tried-and-tested
implementation of a restarted Lanczos method to compute the smaller
end of the spectrum. This is what we currently use in the NOSA code. 

Other approaches can be considered as well. The only requirement is that 
the projected matrix have the form 
\[
  W_m(\mathbf x)^T M(\mathbf x)^{\frac 12} K(\mathbf x)^{-1} M(\mathbf x)^{\frac 12} W_m(\mathbf x),
\] which
in turn implies that we are interested in the lowest end of the spectrum. 
For this reason, and in order to keep the explanation simple and self-contained, 
we decided to consider only the Lanczos method herein. Even if we 
never mention restarted variants explicitly, their use does not require any
change in the proposed strategy. 

\subsubsection{Computing derivatives} \label{sec:derivatives}

In order to apply Algorithm \ref{alg:tr} and, in particular, to perform the first-order
correction  (\ref{eq:corr}), we need to be able
to evaluate $\nabla\phi(\mathbf x^{(k)})$ and $\nabla\phi_k^{\mathcal R}(\mathbf x^{(k)})$. 
In both cases, the problem can be rephrased in terms of the computation of 
a derivative of the eigenvalues of a pencil $K - \lambda M$, in view of 
\begin{equation} \label{eq:scalar-product-eigs}
  \frac{\partial}{\partial x_j} \norm*{\frac{\sqrt{\Lambda_s(K(\mathbf x), M(\mathbf x))}}{2 \pi} - \mathbf f}_{\mathbf w, 2}^2 =
      \left(
       \frac{\sqrt{\Lambda_s(K(\mathbf x), M(\mathbf x))}}{2 \pi} - \mathbf f
       \right)^T  
    D_{\mathbf w}^2 
      \frac{ 
      	  \frac{\partial}{\partial x_j} \Lambda_s(K(\mathbf x), M(\mathbf x))
      	}{
      	  2 \pi \sqrt{\Lambda_s(K(\mathbf x), M(\mathbf x))}
      	},
\end{equation}
where $D_{\mathbf w} = \mathrm{diag}(w_1, \ldots, w_s)$, as in 
\eqref{eq:obj}.
Classical perturbation theory for eigenvalues yields the following, for $i = 1, \ldots, s$:
\[
  \left[ \frac{\partial}{\partial x_j} \Lambda_s(K(\mathbf x), M(\mathbf x)) \right]_i = 
    \left(v_i^T \left( \frac{\partial K(\mathbf x)}{\partial x_j} - \lambda_i \frac{\partial M(\mathbf x)}{\partial x_j} \right) v_i\right) \cdot \left( 
      v_i^T M(\mathbf x) \, v_i
    \right)^{-1}, 
\]
where $v_i$ is an eigenvector relative to the eigenvalues $\lambda_i$. 

\subsection{Constructing a local model for the trust-region scheme}

Now that we have an approximation $F_{m,0}(\mathbf x)$ of $F(\mathbf x)$, 
we can obtain a local model for the trust-region scheme by replacing 
$\hat\Lambda_s(F(\mathbf x))$ with $\hat\Lambda_s(F_{m,0}(\mathbf x))$. 

In view of Lemma~\ref{lem:f0}, the approximation of $F(\mathbf x)$ 
with $F_{m,0}(\mathbf x)$ is correct only up to the zeroth order, and 
the same 
holds when composing it with $\hat\Lambda_s(\cdot)$, and so
for $\f(\mathbf x)$ and $\fr(\mathbf x)$ as well. However, 
using the strategy described in Section~\ref{sec:firstorder}, 
we can modify $\fr(\mathbf x)$ a posteriori to match at the first-order. 

Since both
$\f(\mathbf x)$ and $\fr(\mathbf x)$ are of the form described in equation
\eqref{eq:scalar-product-eigs}, and in both cases the eigenvectors are available
while computing the eigenvalues, the same formula can be used directly. 

\begin{lemma} \label{lem:objectivef}
    Let $\fr(\mathbf x)$ be the function defined as follows
    \begin{equation}\label{eq:modtr}
      \fr(\mathbf x) = \tilde\fr(\mathbf x) + (\nabla \f(\mathbf x^{(0)}) - 
      \nabla \tilde \fr(\mathbf x^{(0)}))^T (\mathbf x - \mathbf x^{(0)}), \quad 
      \tilde \fr(\mathbf x) = \norm*{\frac{\sqrt{\hat\Lambda_s(F_{m,0}(\mathbf x))}}{2\pi} - \mathbf f}_{\mathbf w, 2}^2.      
    \end{equation}
    Then, $\fr(\mathbf x)$ is a first-order approximation of $\f(\mathbf x)$ 
    at $\mathbf x = \mathbf x^{(0)}$. 
\end{lemma}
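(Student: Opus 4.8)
The plan is to read~\eqref{eq:modtr} as the value- and gradient-matching corrections (AM.2)--(AM.3) applied to the uncorrected model $\tilde\fr$, and then to promote the pointwise agreement of value and gradient at $\mathbf x^{(0)}$ to a full $O(\norm{\mathbf x - \mathbf x^{(0)}}^2)$ estimate by invoking $\mathcal C^2$ regularity. The only point that needs care at the outset is that~\eqref{eq:modtr}, unlike the general recipe~\eqref{eq:corr}, carries no constant correction term; this is admissible precisely because $\tilde\fr$ already matches $\f$ at zeroth order at the expansion point, which is the first thing I would establish.

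First I would verify the zeroth-order match $\tilde\fr(\mathbf x^{(0)}) = \f(\mathbf x^{(0)})$. Combining Lemma~\ref{lem:stage2}, which gives $F_{m,0}(\mathbf x^{(0)}) = F_0(\mathbf x^{(0)})$ since a first-order approximation coincides with its target at the base point, with Lemma~\ref{lem:f0}, which gives $F_0(\mathbf x^{(0)}) = F(\mathbf x^{(0)})$, yields $F_{m,0}(\mathbf x^{(0)}) = F(\mathbf x^{(0)})$. As the Lanczos projection reproduces the $s$ relevant extremal eigenvalues of the pencil, $\hat\Lambda_s(F(\mathbf x^{(0)})) = \Lambda_s(K(\mathbf x^{(0)}), M(\mathbf x^{(0)}))$, so the arguments fed to the common functional $\mathbf y \mapsto \norm*{\frac{\sqrt{\mathbf y}}{2\pi} - \mathbf f}_{\mathbf w,2}^2$ coincide at $\mathbf x^{(0)}$. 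Hence $\tilde\fr(\mathbf x^{(0)}) = \f(\mathbf x^{(0)})$, and since the affine term in~\eqref{eq:modtr} vanishes at $\mathbf x^{(0)}$, also $\fr(\mathbf x^{(0)}) = \f(\mathbf x^{(0)})$.

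Next I would check the gradient match. Differentiating~\eqref{eq:modtr}, the affine correction has constant gradient $\nabla\f(\mathbf x^{(0)}) - \nabla\tilde\fr(\mathbf x^{(0)})$, whence $\nabla\fr(\mathbf x^{(0)}) = \nabla\tilde\fr(\mathbf x^{(0)}) + (\nabla\f(\mathbf x^{(0)}) - \nabla\tilde\fr(\mathbf x^{(0)})) = \nabla\f(\mathbf x^{(0)})$; both gradients are well defined, and in fact computable through the eigenvalue perturbation formula of Section~\ref{sec:derivatives}. To conclude, I would observe that both $\f$ and $\fr$ are $\mathcal C^2$ near $\mathbf x^{(0)}$: $\f$ by (AF.1), and $\fr$ as the sum of the affine term with $\tilde\fr$, the latter being the composition of the $\mathcal C^2$ map $F_{m,0}$ of Lemma~\ref{lem:stage2} with the eigenvalue functional, itself $\mathcal C^2$ near $F(\mathbf x^{(0)})$. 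With matching value and gradient and uniformly bounded second derivatives on a neighborhood $\mathcal N$, a second-order Taylor expansion of $\fr - \f$ about $\mathbf x^{(0)}$ gives $\fr(\mathbf x) - \f(\mathbf x) = O(\norm{\mathbf x - \mathbf x^{(0)}}^2)$, the asserted first-order approximation.

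The step I expect to demand the most care is the $\mathcal C^2$ regularity of $\tilde\fr$, which hinges entirely on the smoothness of $\hat\Lambda_s$ as a function of the matrix entries. This is guaranteed only where the largest $s$ eigenvalues of $F_{m,0}$ stay simple and positive, so that the eigenvalue branches, and then their inversion and square root, are real-analytic; this is exactly the distinct-eigenvalue hypothesis isolated in Lemma~\ref{lem:firstorderf} and discussed in the subsequent Remark. Where simplicity fails the eigenvalue map is merely continuous, the very gradient $\nabla\tilde\fr(\mathbf x^{(0)})$ entering the correction would be ill-defined, and the whole matching argument would collapse; restricting to a neighborhood $\mathcal N$ on which the relevant eigenvalues remain separated is therefore essential rather than cosmetic.
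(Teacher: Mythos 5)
Your proposal is correct and follows essentially the same route as the paper's (much terser) proof: the zeroth-order match $\f(\mathbf x^{(0)})=\tilde\fr(\mathbf x^{(0)})$ is obtained by chaining Lemmas~\ref{lem:f0}, \ref{lem:firstorderf}, and \ref{lem:stage2}, and the affine term in~\eqref{eq:modtr} then enforces the gradient match exactly as in the correction~\eqref{eq:corr} of Section~\ref{sec:first-order}. Your additional observations --- that no constant correction is needed because of the zeroth-order match, that the Taylor argument requires $\mathcal C^2$ regularity, and that this regularity hinges on the simple-eigenvalue assumption --- merely make explicit what the paper leaves implicit.
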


\begin{proof}
    In view of Lemma~\ref{lem:f0}, \ref{lem:firstorderf}, \ref{lem:stage2}
     we have $\f(\mathbf x^{(0)}) = \tilde \fr(\mathbf x^{(0)})$. Given this condition, the
    first-order matching is ensured by the gradient correction
    done as described with Section~\ref{sec:firstorder}. 
\end{proof}

\begin{remark}
    Although the formulas for $\nabla \f(\mathbf x)$ and $\nabla \fr(\mathbf x)$ are not reported explicitly in Lemma~\ref{lem:objectivef}, they 
    are directly implementable in view of Equation~\eqref{eq:scalar-product-eigs}, and require almost 
    no computational effort when the eigenvectors are obtained through
    the Lanczos process. 
\end{remark}


\section{Numerical experiments and real examples}
\label{sec:examples}

\newcommand{\mpa}{\mathrm{MPa}}
\newcommand{\kgm}{\mathrm{\frac{kg}{m^3}}}

In this section we verify the performance of the proposed
approach on both artificial and
real examples. To this end, we will compare the number of iterations needed
to achieve convergence, which is reported in terms of outer iterations, that is
to say, the number of reduced models
computed in order to complete the procedure.

The number of iterations of the inner optimizer, used within a single trust-region,
are ignored because they are largely irrelevant in determining the
final computational cost. Typical tests show that the total time spent optimizing
the models within the trust-region is less than $1\%$ of the total running
time of the algorithm.

\subsection{Some implementation details}\label{sec:impl} 

The tests were run on a computer with an Intel Core i7-920 CPU running at 2.67 GHz,
with 18 GB of RAM clocked
at 1066 MHz. We used the single-core version of MUMPS 4.10, and
the Intel MKL BLAS shipped with MATLAB R2017b.

In all experiments the tolerance $\tau$ for the accuracy of the
Lanczos method was set to $10^{-5}$ while the trust-region procedure was
 stopped when
the norm of the projected gradient norm fell below  $10^{-4}$
(i.e. $\epsilon = 10^{-4}$ in Algorithm \ref{alg:tr}). All the other parameters
in Algorithm \ref{alg:tr} were set to standard values as suggested in
\cite[Chapter 17]{trBible}. The trust-region radius update follows
\cite[Chapter 17, page 782]{trBible} as well. Also, in Algorithm \ref{alg:tr}
the trust-region step $\bf s^{(k)}$ is computed ``exactly'', that is, we
 minimize the reduced model (\ref{eq:modtr}) in ${\cal B}_k \cap \Omega$
 to high accuracy. This is a reasonable choice due to problem low dimension of the
 problem.
To this end, we used
the function $\texttt{fmincon}$ included in the \matlab's Optimization
toolbox, setting the built-in \texttt{sqp} solver and using the
default parameter settings.

Furthermore, the parameter space is always preliminary
scaled, so that the initial point $\mathbf x^{(0)}$ is the vector of all ones. This ensures
that checking norm-wise conditions on the eigenvalues and on the gradient
yields relative accuracy on the parameters, independently of their scaling.

The weight vector $\mathbf w$ is always chosen as $w_i = f_i^{-1}$, which ensures
relative accuracy on the recovered frequency, except where otherwise stated. 
In particular, in the clock tower example, we emphasize the consequences
of a different choice of $\mathbf w$.

\subsection{Arch on piers}

In this section we consider a simple example for demonstration purposes. We built
a 2D discretization of the arch on piers, shown in Figure~\ref{fig:portal}.
The arch spans $4$ meters, and rests on two $4$-meters-high lateral piers. The structure is modeled by means of $336$ finite elements, and clamped
at the base of the two piers. This corresponds to $851$ total degrees of freedom
in the structure. We have used plane strain $4$-node quadrilateral elements. 

The
three materials composing the structure are depicted with different colors
in Figure~\ref{fig:portal}.
Material $1$ is used for the arch (green in the figure), 
and materials $2$ and $3$ for the piers (red and blue, respectively).
Poisson's ratio $\nu$ is set to $0.2$ for all the materials, and assumed
to be known a priori.
\begin{figure}
    \centering
    \includegraphics[height=.35\textheight]{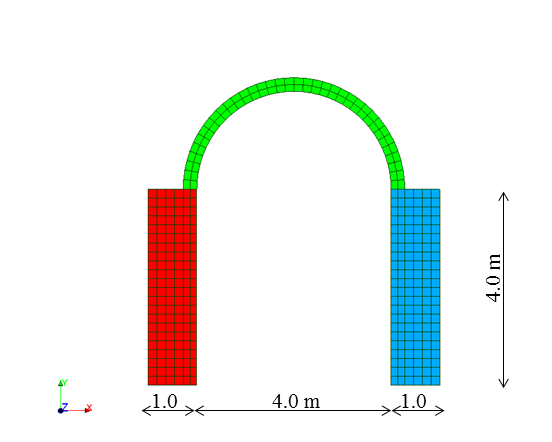}
    \caption{The arch on piers modeled in
        the experiments. Each differently colored region 
        corresponds to a different material. }
    \label{fig:portal}
\end{figure}

\begin{figure} \centering
    \begin{tikzpicture}
    \begin{semilogyaxis}[width=.45\linewidth, height = .35\textheight, ymax = 5,
    ylabel = Obj. function / Proj. gradient, xlabel = Iterations, legend pos = south west,
    ymin = 1e-13]
    \addplot table {dat/convergence_std.dat};
    \addplot table[x index = 0, y index = 7] {dat/convergence_std.dat};
    \addplot[dashed, domain = 0:4] {1e-4};
    \legend{Obj. function, Proj. gradient, Tolerance ($10^{-4}$)}
    \end{semilogyaxis}
    \end{tikzpicture}~~~~~~~~
    \begin{tikzpicture}
    \begin{axis}[width=.45\linewidth, height = .35\textheight,
    ylabel = Frequencies (Hz), xlabel = Iterations, legend pos = north west]
    \addplot table[x index = 0, y index = 2] {dat/convergence_std.dat};
    \addplot[domain = 0 : 4, dashed]{ 9.575048987168955e+00 };
    \addplot table[x index = 0, y index = 3] {dat/convergence_std.dat};
    \addplot[domain = 0 : 4, dashed]{ 1.487136204507410e+01 };
    \addplot table[x index = 0, y index = 4] {dat/convergence_std.dat};
    \addplot[domain = 0 : 4, dashed]{ 2.317329261256074e+01 };
    \addplot table[x index = 0, y index = 5] {dat/convergence_std.dat};
    \addplot[domain = 0 : 4, dashed]{ 3.917029465848877e+01 };
    \addplot table[x index = 0, y index = 6] {dat/convergence_std.dat};
    \addplot[domain = 0 : 4, dashed]{ 6.283760720867433e+01 };
    \end{axis}
    \end{tikzpicture}
    \caption{The left graph shows the convergence of the objective function
        to the minimum for each new reduced model generated in the
        optimization procedure. The dashed line is the tolerance
        set for the projected gradient in the optimization scheme.
        The right graph plots the convergence of the
        frequencies during the process. }
    \label{fig:portal-convergence-near}
\end{figure}

\begin{figure} \centering
    \begin{tikzpicture}
        \begin{semilogyaxis}[width=.45\linewidth, height = .3\textheight,
        ylabel = Obj. function / Proj. gradient, xlabel = Iterations,
        legend pos = south west, ymin = 1e-9]
             \addplot table {dat/convergence_far.dat};
             \addplot table[x index = 0, y index = 7] {dat/convergence_far.dat};
             \addplot[dashed, domain = 0 : 11] {1e-4};
             \legend{Obj. function, Proj. gradient, Tolerance ($10^{-4}$)}
        \end{semilogyaxis}
    \end{tikzpicture}~~~~~~~~
    \begin{tikzpicture}
        \begin{axis}[width=.45\linewidth, height = .3\textheight,
          ylabel = Frequencies (Hz), xlabel = Iterations]
            \addplot table[x index = 0, y index = 2] {dat/convergence_far.dat};
            \addplot[domain = 0 : 11, dashed]{ 9.575048987168955e+00 };
            \addplot table[x index = 0, y index = 3] {dat/convergence_far.dat};
            \addplot[domain = 0 : 11, dashed]{ 1.487136204507410e+01 };
            \addplot table[x index = 0, y index = 4] {dat/convergence_far.dat};
            \addplot[domain = 0 : 11, dashed]{ 2.317329261256074e+01 };
            \addplot table[x index = 0, y index = 5] {dat/convergence_far.dat};
            \addplot[domain = 0 : 11, dashed]{ 3.917029465848877e+01 };
            \addplot table[x index = 0, y index = 6] {dat/convergence_far.dat};
            \addplot[domain = 0 : 11, dashed]{ 6.283760720867433e+01 };
        \end{axis}
    \end{tikzpicture}
    \caption{The left graph shows the convergence of the objective function
        to the minimum for each new reduced model generated in the
        optimization procedure. The dashed line is the tolerance
        set for the projected gradient in the optimization scheme.
        The right graph plots the convergence of the
        frequencies during the process. In this case, the initial points
        were chosen quite far from the original ones, selecting
        $E_2 = 2000\ \mpa, \rho_2 = 1100\ \mathrm{kg\cdot m^{-3}},
          E_3 = 1100\ \mpa$.}
        \label{fig:portal-convergence-far}
\end{figure}

For the three materials we assume
the following values:
\begin{align*}
  E_1 &= 3250\ \mathrm{MPa} & E_2 &= 5000\ \mathrm{MPa} & E_3 &= 4800\ \textrm{MPa} \\
  \rho_1 &= 1800\ \mathrm{\frac{kg}{m^{3}}} & \rho_2 &= 2200\ \mathrm{\frac{kg}{m^{3}}} &
  \rho_3 &= 2100\ \mathrm{\frac{kg}{m^{3}}},
\end{align*}
where the index $j \in \{ 1, 2, 3 \}$ indicates the material under consideration.
We computed the $5$ leading frequencies by running the NOSA-ITACA code  (with
these fixed parameters), to obtain:
\[
  \mathbf f \approx \begin{bmatrix}
    9.575 & 14.87 & 23.17 & 39.17 & 62.84
  \end{bmatrix}.
\]
Our aim is to validate the optimization method by recovering
the original parameters matching
these frequencies, assuming $E_2, \rho_2$, and $E_3$ are unknown.
We consider the following bounds of
realistic parameters:
\begin{align*}
  1000\ \mathrm{MPa} \leq &E_2 \leq 9000\ \mathrm{MPa}, &
  1000\ \mathrm{\frac{kg}{m^{3}}} \leq &\rho_2 \leq 3000\ \mathrm{\frac{kg}{m^{3}}}, &
  1000\ \mathrm{MPa} &\leq E_3 \leq 9000\ \mathrm{MPa},
\end{align*}
which define the corresponding box $\Omega$ in (\ref{eq:Omega})

In the default implementation, the starting points have been chosen the
midpoint of the intervals, which are quite good estimates of the true
values. Convergence in this case is displayed in Figure~\ref{fig:portal-convergence-near}. The initial
points are sufficiently close to the correct ones for the frequencies
to almost match already, and the distance to the optimum during
convergence is not easily distinguishable.  However, the left plot in Figure~\ref{fig:portal-convergence-near}, which shows the convergence
of the objective function in log-scale, clearly demonstrates that we are recovering
the right solution.

To test the robustness of the approach, we modified the starting
points to be relatively far from the correct ones by making the following
choices:
\begin{align*}
     E_2^{(0)} &= 2000\ \mathrm{MPa}, & \rho_2^{(0)} &= 1100\ \mathrm{\frac{kg}{m^{3}}}, &
     E_3^{(0)} &= 1100\ \mathrm{MPa}.
\end{align*}
Convergence of the method is displayed in the graphs in Figure~\ref{fig:portal-convergence-far}, which clearly shows that the
initial frequencies are quite far from the correct ones; yet the method
reaches convergence using only $12$ reduced models. Moreover, a rough estimate (though probably
sufficient for most engineering purposes) is already obtained
after about 6 steps. Moreover, the Young's modulus and the mass density of the
initial model are recovered exactly up to $5$ digits in this example. 

Another important feature for a method that must deal with experimentally
measured frequencies is its robustness when the input is subject to noise.
In fact, we would expect the frequencies to be accurate only up to a
certain relative threshold.

To simulate the behavior of the method in a predictable environment, we
perturbed the frequency vector $\hat {\mathbf f} = \mathbf f + \delta \mathbf f$, by imposing
 $|\delta f_i| \leq |f_i| \cdot \delta$, with $\delta$ the prescribed
noise level. We have run tests for $\delta$ ranging from $0.01 \%$ to
$100 \%$: the corresponding error in the retrieved frequencies is
plotted in Figure~\ref{fig:noise}.

The error is measured in relative norm, that is we have plotted the infinity
norm of the vector with components $(x_i - \hat x_i) / x_i$, where $x_i$
is the vector with the actual model parameters, and $\hat x_i$
those estimated by the optimization process.

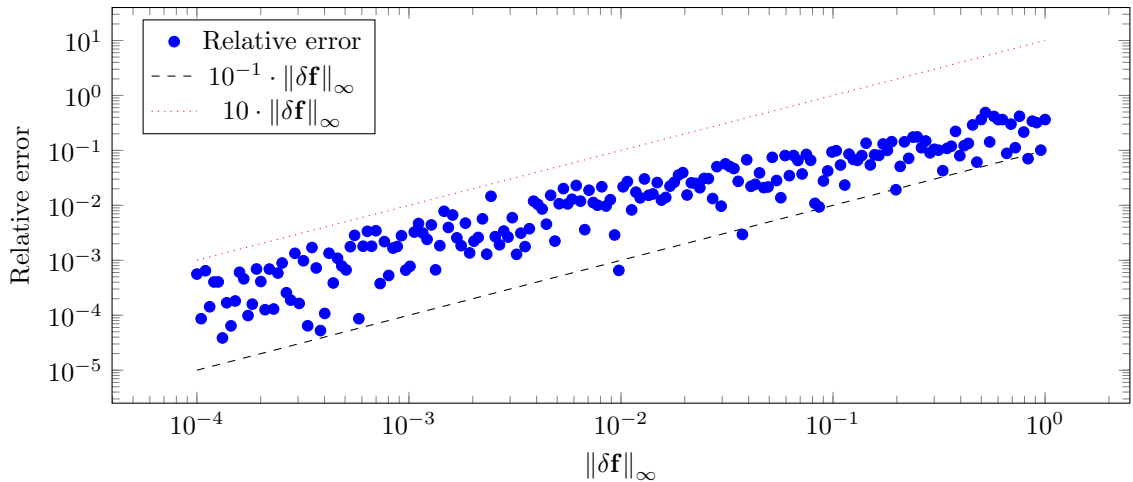
\begin{figure}
    \centering

    \begin{tikzpicture}
    \begin{loglogaxis}[width = \linewidth, legend pos = north west,
    height = .35\textheight, xlabel = $\norm{\delta \mathbf f}_\infty$,
    ylabel = Relative error]
    \addplot[mark=*, blue, only marks] table {dat/arco_noise.dat};
    \addplot[domain = 1e-4 : 1e0, dashed, black] { 1e-1 * x };
    \addplot[domain = 1e-4 : 1e0, dotted, red] { 10 * x };
    \legend{Relative error, $10^{-1} \cdot \norm{\delta \mathbf f}_\infty$, $10 \cdot \norm{\delta \mathbf f}_\infty$};
    \end{loglogaxis}
    \end{tikzpicture}

\caption{This plot shows the
     relation between the noise affecting the frequency measurements 
    and the component-wise relative error on the retrieved optimal
    parameters. The graph shows the linear dependency of the error
    on the estimated parameters and the noise level of the
    input data.}
\label{fig:noise}
\end{figure}

The linear behavior of the relative error with respect to the noise level is
clearly visible. This behavior suggests that,
even if the frequency measurements are contaminated with noise, it
is still possible to retrieve meaningful parameters if enough information
is provided. Note that, in general, correct (or accurate) recovery 
of the parameters might not be possible, and a more detailed study of 
the condition number of this inverse problem will be object of further
in-depth study in 
in future research. 

\subsection{A dome}

Here we provide a more complex example, represented by a dome supported by four
14-meters-high pillars. The dome consists of an octagonal shaped cloister
vault 5 meters high, resting on a drum inscribed on a $10 \times 11$ meters rectangle. We assume the structure to be composed of 4 different materials:
material 1 for the vault (red in Figure~\ref{fig:dome-model}), material 2 for the top of the drum
(blue), material 3 for the lower part of the
drum (green) and material 4 for the pillars (gray).

The finite element model consists of $\num{31052}$ elements and $\num{41245}$ nodes.
It is modeled
using 3D $8$-node hexahedron brick elements, which gives rise to a (projected) model with
$\num{122,853}$ degrees of freedom.

\begin{figure}
    \centering
    \includegraphics[height=.45\textheight]{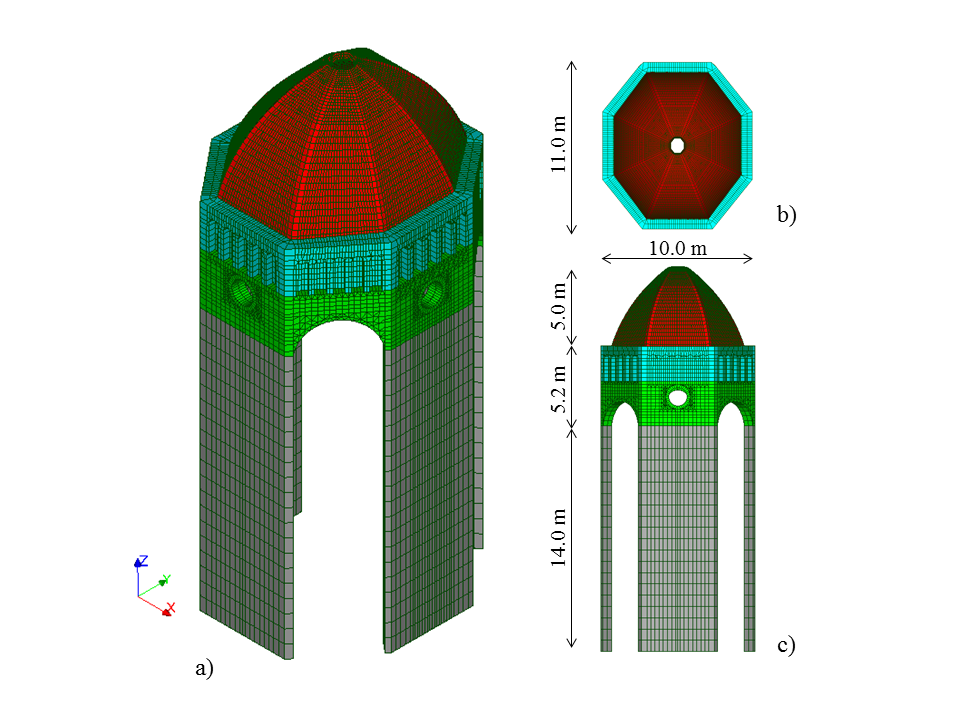}
    \caption{Domed temple}
    \label{fig:dome-model}
\end{figure}

We performed a test similar to that carried out for the arch on piers: 
we computed the frequencies for
a certain choice of parameters, some of which were assumed 
to be unknown, and ran the
optimization scheme trying to recover the original parameters.

In order to obtain the reference frequencies the characteristics 
of the 4 materials (Young's modulus and mass density)
were set as follows:
\begin{align*}
    E_1 &= 3000\ \mpa & \rho_1 &= 1800\ \kgm &
    E_2 &= 4000\ \mpa & \rho_2 &= 2000\ \kgm \\
    E_3 &= 3500\ \mpa & \rho_3 &= 1900\ \kgm &
    E_4 &= 5000\ \mpa & \rho_2 &= 2200\ \kgm \\
    \nu_j &= 0.25, \qquad  j = 1, \ldots, 4.
\end{align*}
The leading $10$ frequencies have been computed using the NOSA-ITACA code. The
result, expressed in Hertz, are the frequencies in the vector:
\[
  \mathbf f \approx \begin{bmatrix}
    2.19 & 2.23 & 3.76 & 3.83 & 4.32 & 4.60 &
    4.72 & 8.26 & 8.30 & 9.21
  \end{bmatrix}.
\]
The optimization code was run setting the bounds as
\[
  2000\ \mpa \leq E_j \leq 6000\ \mpa \qquad
  1600\ \kgm \leq \rho_j \leq 2400\ \kgm \qquad
  j = 1,\ldots, 4,
\]
with the sole exception of $\rho_3$ which was set to the correct value.
This leaves $7$ parameters to be optimized. No
particular starting conditions were specified. In this case, the algorithm
chooses the midpoint of the intervals. The convergence history is reported in Figure~\ref{fig:dome}, which reveals that high accuracy is attained with only
$4$ outer iterations --- with the residual of the objective function
being very small even after two steps.
The weights have been chosen in order to reach relative accuracy\footnote{A detailed discussion of the influence of the weights on the accuracy of the recovered frequencies is presented
    in the next section, where its relevance is discussed for a real example.} (i.e., $w_i = f_i^{-1}$)
on the target frequencies, with the usual tolerance $\epsilon = 10^{-4}$.
\begin{figure}
\centering
\begin{tikzpicture}
\begin{semilogyaxis}[
xtick={0,1,2,3,4},
width=.45\linewidth, height = .35\textheight,
ylabel = Obj. function / Proj. gradient, xlabel = Iterations,
legend pos = north east, ymax = 1e3]
\addplot table[x index = 0, y index = 1] {dat/convergence_massa_relative.dat};
\addplot table[x index = 0, y index = 12] {dat/convergence_massa_relative.dat};
\addplot[dashed, domain = 0:4] {1e-4};
\legend{Obj. function, Proj. gradient, Tolerance ($10^{-4}$)}
\end{semilogyaxis}
\end{tikzpicture}~~~~~~~~
\begin{tikzpicture}
\begin{axis}[width=.45\linewidth, height = .35\textheight,
ylabel = Frequencies (Hz), xlabel = Iterations]
\addplot table[x index = 0, y index = 2] {dat/convergence_massa_relative.dat};
\addplot[domain = 0 : 4, dashed]{ 2.19 };
\addplot table[x index = 0, y index = 3] {dat/convergence_massa_relative.dat};
\addplot[domain = 0 : 4, dashed]{ 2.23 };
\addplot table[x index = 0, y index = 4] {dat/convergence_massa_relative.dat};
\addplot[domain = 0 : 4, dashed]{ 3.76 };
\addplot table[x index = 0, y index = 5] {dat/convergence_massa_relative.dat};
\addplot[domain = 0 : 4, dashed]{ 3.83 };
\addplot table[x index = 0, y index = 6] {dat/convergence_massa_relative.dat};
\addplot[domain = 0 : 4, dashed]{ 4.32 };
\addplot table[x index = 0, y index = 7] {dat/convergence_massa_relative.dat};
\addplot[domain = 0 : 4, dashed]{ 4.6 };
\addplot table[x index = 0, y index = 8] {dat/convergence_massa_relative.dat};
\addplot[domain = 0 : 3, dashed]{ 4.72 };
\addplot table[x index = 0, y index = 9] {dat/convergence_massa_relative.dat};
\addplot[domain = 0 : 3, dashed]{ 8.26 };
\addplot table[x index = 0, y index = 10] {dat/convergence_massa_relative.dat};
\addplot[domain = 0 : 3, dashed]{ 8.30 };
\addplot table[x index = 0, y index = 11] {dat/convergence_massa_relative.dat};
\addplot[domain = 0 : 3, dashed]{ 9.21 };
\end{axis}
\end{tikzpicture}

    \caption{Convergence history for the dome model, matching 
    	$10$ frequencies with $7$ free parameters. The left plot shows the convergence of the objective function
        and the norm of the projected gradient, and in the right plot the convergence
        of frequencies to the exact ones is reported. }
    \label{fig:dome}
\end{figure}
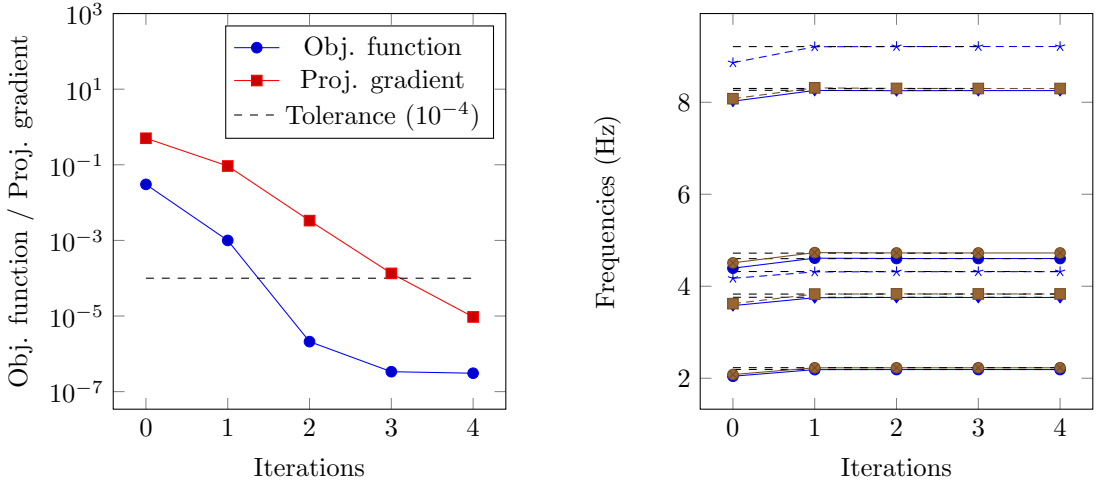

The estimated mechanical parameters obtained through
the optimization are as follows: 
\begin{align*}
E_1 &= 2953\ \mpa & \rho_1 &= 1769\ \kgm &
E_2 &= 3932\ \mpa & \rho_2 &= 1960\ \kgm \\
E_3 &= 3426\ \mpa & \rho_3 &= 1900\ \kgm &
E_4 &= 4951\ \mpa & \rho_2 &= 2174\ \kgm. 
\end{align*}
This corresponds to a relative error between $0.97\%$ and $2.1\%$, with
an average of about $1.6\%$. Although such accuracy is a quite good accuracy for practical purposes,  even more 
precise estimates can be obtained by reducing the tolerance to below
$10^{-4}$ that we have used in these tests. 

\subsection{The Clock tower}

As a real example, we consider the case study of the Clock tower
(``Torre delle Ore'') in Lucca, Italy. The 48.4 m high masonry
structure, dating back to the 13th century, has a rectangular cross
section of about 5.1$\times$7.1 meters and walls of thickness varying
from 1.77 m at the base to 0.85 m at the top. Two barrel vaults are
set at heights of  about 12.5 m and 42.3 m. The bell
chamber is covered by a pavillon roof made up of wooden trusses and
rafters. At a height of about 33 m the walls have been instrumented with 4
steel tie rods of 30$\times$30 mm rectangular section. The adjacent
buildings abut the tower on two sides for a height of about 13 m and
constitute asymmetric boundary conditions. The modal behavior of the
tower has already been analyzed using the NOSA-ITACA code and
through experimental measurements conducted in November 2016, when
the tower was fitted with 4 triaxial seismometric stations. The two
procedures yielded similar, albeit not perfectly matching results
\cite{pellegrini2017anew}. Figure~\ref{fig:tore} shows a picture of the tower,
together with the mesh
used for its discretization.

\begin{figure}
	\centering
	\begin{minipage}{.45\linewidth}
    	\centering
       \includegraphics[width = .35\textheight, angle = 90]{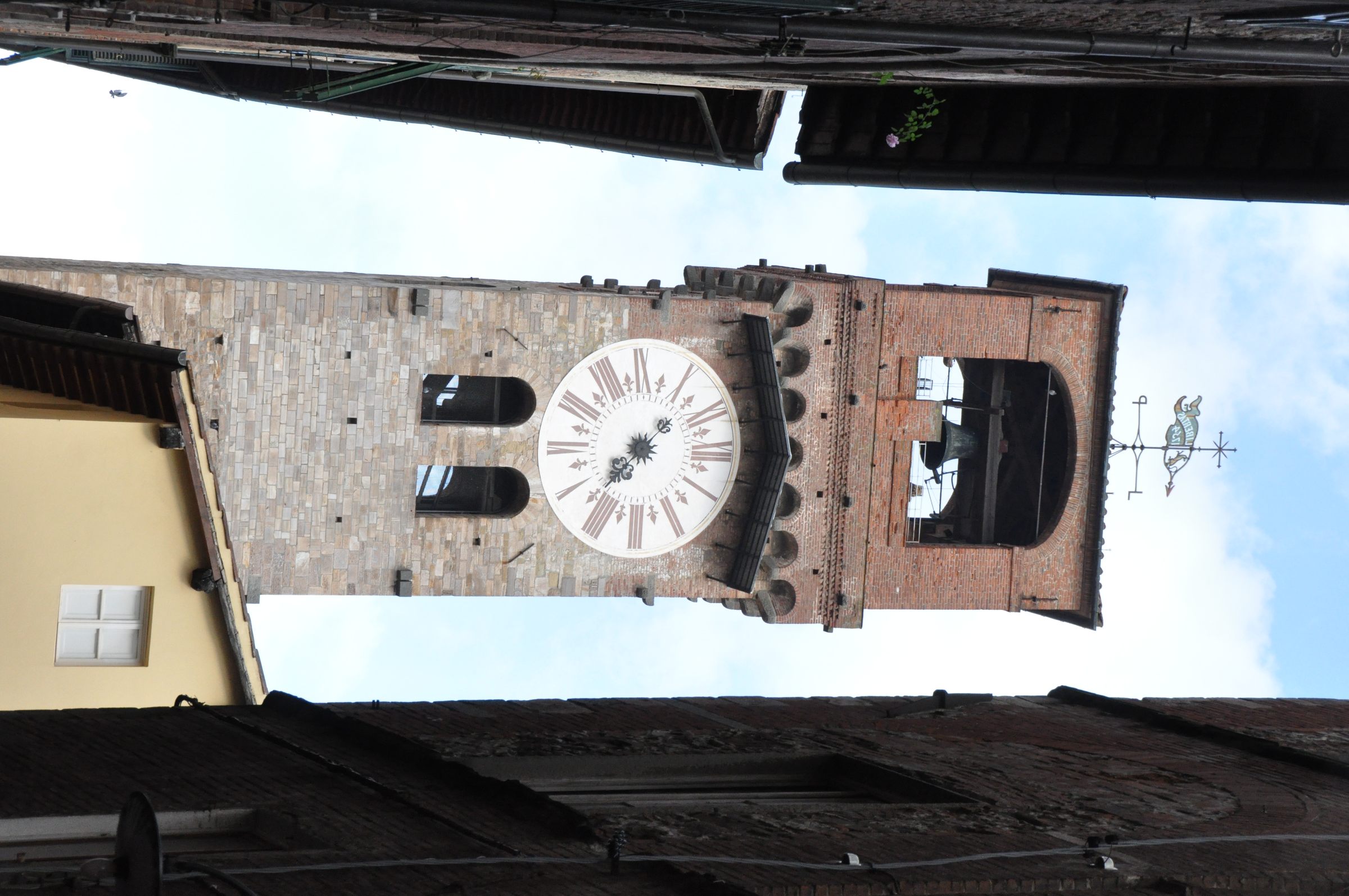}
	\end{minipage}~~\begin{minipage}{.5\linewidth}
	   \centering
       \includegraphics[height = .35\textheight]{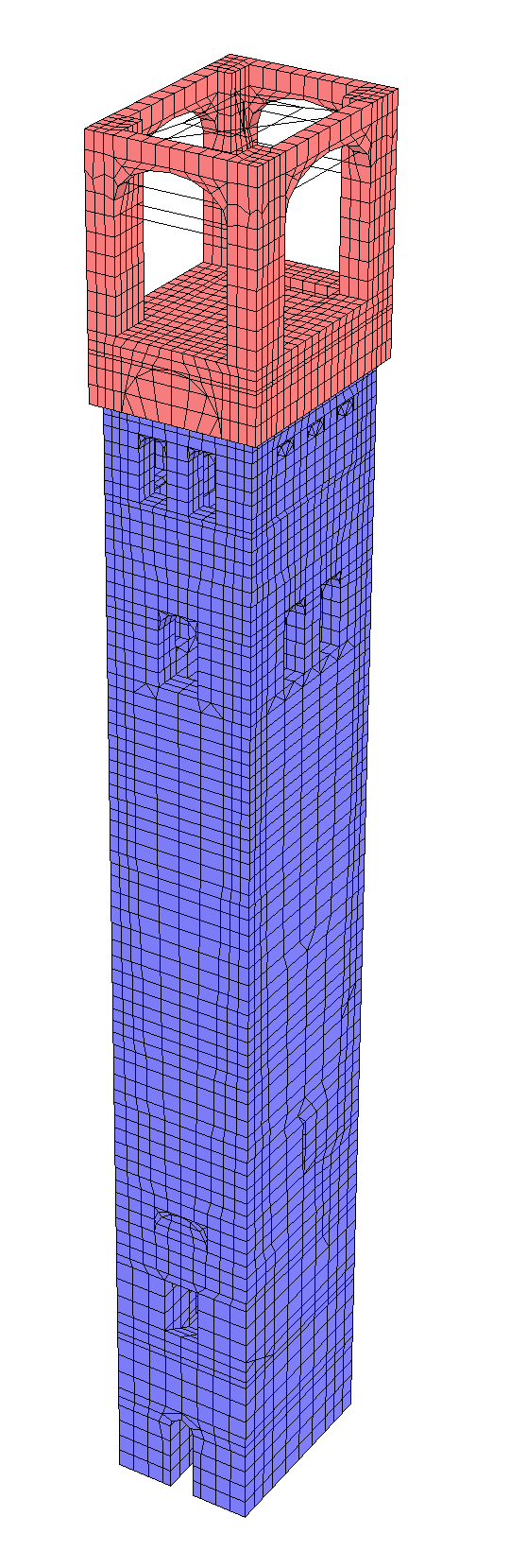}
     \end{minipage}
     \caption{On the left, a picture of the clock tower in Lucca. On the right, 
     	the mesh used for the finite element discretization, where the
     	different materials for the bell chamber and the lower part of the
     	tower are reported in different colors (respectively red and light blue).}
     \label{fig:tore}
\end{figure}

The frequencies obtained by analyzing data from the fitted instruments
via Operational Modal Analysis techniques, are as follows:
\[
  \mathbf f \approx \begin{bmatrix}
    1.05 & 1.3 & 4.19 & 4.50
  \end{bmatrix}.
\]
The frequencies obtained with the
finite element model in \cite{pellegrini2017anew}
are, instead, $[ 0.98 \ 1.24 \ 4.32 \ 4.38 ]$.
The materials constituting
the tower are wood, steel, and masonry, with the latter being different
in the bell chamber and the lower part of the tower.
The mechanical properties used in the finite 
element model are
\begin{align*}
   E_{w} &= 10000\ \mathrm{MPa} & \rho_{w} &= 800\ \mathrm{\frac{kg}{m^{3}}} & \nu_{w} &= 0.35 \\
   \rho_{m_l} &= 2100\ \kgm & \rho_{m_c} &= 1700\ \kgm & \nu_{m_l} &= \nu_{m_c} = 0.2 \\
   E_{s} &= \num{210000}\ \mpa & \rho_s &= 7850\ \kgm & \nu_s &= 0.3, 
\end{align*}
and we leave the Young's moduli $E_{m_c}$ and $E_{m_l}$ as free
parameters, since they are not known explicitly
due to the lack of experimental information. 
Here, the subscript $m_l$ identifies the lower part 
of the tower, and $m_c$ the bell chamber. 

We can reformulate the problem as an optimization one, trying
to match the frequencies of the system to the measured ones,
allowing the parameters to vary in a reasonable range. In our case we
can set
\[
  2500\ \mathrm{MPa} \leq E_{m_l} \leq 5500\ \mathrm{MPa} \qquad
  1000\ \mpa \leq E_{m_c} \leq 5500\ \mpa.
\]
As a first test, we choose the weights as the vector of all ones. 
The convergence of the optimization
scheme, obtained with these initial parameters, is reported in Figure~\ref{fig:convergence-tore}. Note that
the method optimizes the third and fourth frequencies to match quite closely, 
but at the price of decreasing the quality of the match for the first two (see Table~\ref{tab:tore-relative-vs-absolute}). 

\begin{figure} \centering
    \begin{tikzpicture}
    \begin{semilogyaxis}[
    xtick={0,1,2,3,4,5},
    width=.45\linewidth, height = .3\textheight,
    ylabel = Obj. function / Proj. gradient, xlabel = Iterations,
    legend pos = north east, ymax = 1e4]
    \addplot table[x index = 0, y index = 1] {dat/convergence_tore.dat};
    \addplot table[x index = 0, y index = 6] {dat/convergence_tore.dat};
    \addplot[dashed, domain = 0:5] {1e-4};
    \legend{Obj. function, Proj. gradient, Tolerance ($10^{-4}$)}
    \end{semilogyaxis}
    \end{tikzpicture}~~~~~~~~
    \begin{tikzpicture}
    \begin{axis}[width=.45\linewidth, height = .3\textheight,
    ylabel = Frequencies (Hz), xlabel = Iterations, xtick = {0,1,2,3,4,5}]
    \addplot table[x index = 0, y index = 2] {dat/convergence_tore.dat};
    \addplot[domain = 0 : 5, dashed]{ 1.05 };
    \addplot table[x index = 0, y index = 3] {dat/convergence_tore.dat};
    \addplot[domain = 0 : 5, dashed]{ 1.30 };
    \addplot table[x index = 0, y index = 4] {dat/convergence_tore.dat};
    \addplot[domain = 0 : 5, dashed]{ 4.19 };
    \addplot table[x index = 0, y index = 5] {dat/convergence_tore.dat};
    \addplot[domain = 0 : 5, dashed]{ 4.50 };
    \end{axis}
    \end{tikzpicture}
    \caption{The left graph reports the convergence history
        of the objective function
        to the minimum for each new reduced model generated in the
        optimization procedure. The norm of the projected gradient
        is reported as well, and the dashed line is the tolerance of the
        optimization scheme. The right plot represents the convergence of the
        frequencies during the process (the dashed lines are the experimental
        frequencies). Iteration $0$ is the evaluation
        function at the starting point. In this example, the
        free parameters are $E_{m_l}$ and $E_{m_c}$.}
    \label{fig:convergence-tore}
\end{figure}
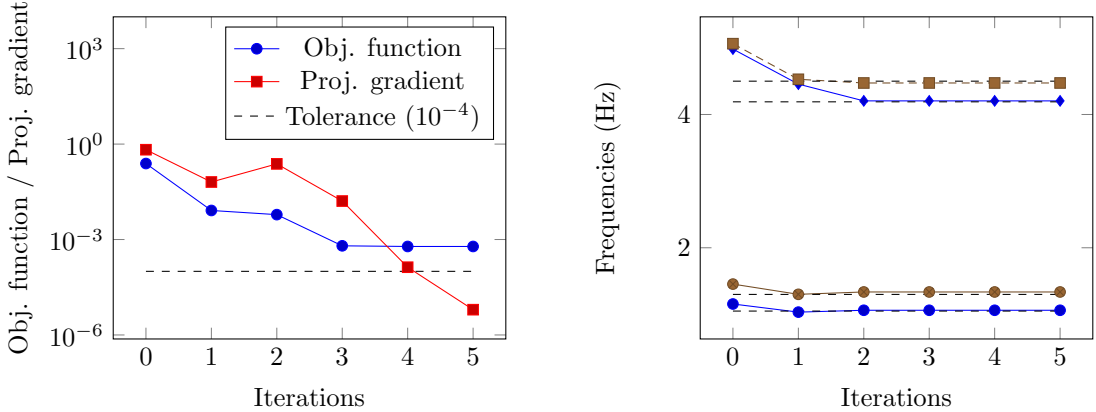

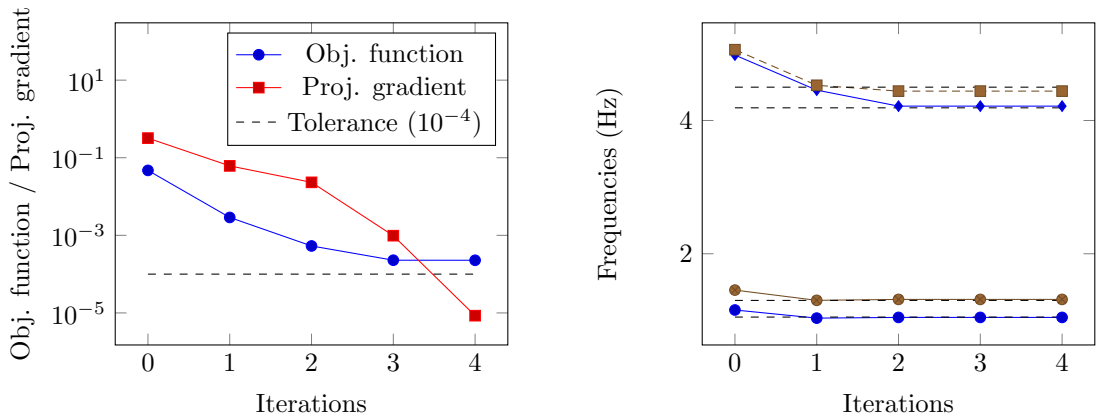
\begin{figure} \centering
	\begin{tikzpicture}
	\begin{semilogyaxis}[width=.45\linewidth, height = .3\textheight,
	ylabel = Obj. function / Proj. gradient, xlabel = Iterations, legend pos = north east,
	ymax = 300]
	\addplot table {dat/convergence_tore_relative.dat};
	\addplot table[x index = 0, y index = 6] {dat/convergence_tore_relative.dat};
	\addplot[dashed, domain = 0:4] {1e-4};
	\legend{Obj. function, Proj. gradient, Tolerance ($10^{-4}$)}
	\end{semilogyaxis}
	\end{tikzpicture}~~~~~~~~
	\begin{tikzpicture}
	\begin{axis}[width=.45\linewidth, height = .3\textheight,
	ylabel = Frequencies (Hz), xlabel = Iterations]
	\addplot table[x index = 0, y index = 2] {dat/convergence_tore_relative.dat};
	\addplot[domain = 0 : 4, dashed]{ 1.05 };
	\addplot table[x index = 0, y index = 3] {dat/convergence_tore_relative.dat};
	\addplot[domain = 0 : 4, dashed]{ 1.30 };
	\addplot table[x index = 0, y index = 4] {dat/convergence_tore_relative.dat};
	\addplot[domain = 0 : 4, dashed]{ 4.19 };
	\addplot table[x index = 0, y index = 5] {dat/convergence_tore_relative.dat};
	\addplot[domain = 0 : 4, dashed]{ 4.50 };
	\end{axis}
	\end{tikzpicture}
	\caption{The left graph reports the convergence history
		of the objective function
		to the minimum for each new reduced model generated in the
		optimization procedure. The norm of the projected gradient
		is reported as well, and the dashed line is the tolerance
		set for the projected gradient in the optimization scheme.
		The right plot represents the convergence of the
		frequencies during the process (the dashed line are the experimental
		frequencies). Iteration $0$ is the evaluation
		function at the starting point. In this example, the
		free parameters are  $E_{m_l}$ and $E_{m_c}$, and the objective
		function is defined using the weights $w_i = f_i^{-1}$. }
	\label{fig:convergence-tore-relative}
\end{figure}

This is connected to the choice of $\mathbf w$, which ensures
norm-wise accuracy, but not a relative one. Often, we prefer to have a low
\emph{relative} error on the frequencies, instead of an absolute one.
Therefore, it is natural to choose $w_i = f_i^{-1}$ (in fact, this
is the default choice in the implementation). This yields
different results, and different approximations for both the parameters
and the frequencies. More specifically, we expect a better match
on the lowest end of the spectrum, and a worse one on the highest
frequencies.

The convergence history with this choice of $\mathbf w$ is reported in Figure~\ref{fig:convergence-tore-relative}, and the results
are shown in Table~\ref{tab:tore-relative-vs-absolute}. It appears
immediately evident that the relative errors are balanced with $w_i = f_i^{-1}$,
whereas they tend to increase on the small frequencies when $w_i = 1$.

The parameters identified by the model updating phase give 
$E_{m_l} \approx \num{3182}\ \mpa$ and $E_{m_c} \approx \num{1873}\ \mpa$ when using
weights equal to one, and $E_{m_l} \approx \num{3076}\ \mpa$ and $E_{m_c} \approx \num{1950}\ \mpa$ when aiming for relative accuracy. 

We conclude this section by comparing the overall CPU time of 
our implementation with those of alternative
solvers, which in various ways fail to exploit the problem structure.
Table~\ref{tab:timings} compares 4 different implementations:
\begin{description}
   \item[RM] This implementation corresponds to the current proposal, that is,
     a solver based on the trust-region and
     projection approach described in the previous section and used in the 
     tests presented herein. This strategy is labeled as ``RM'' in the table --- highlighted with the bold font.
   \item[BB] Here we interpret the finite element code as a black-box function and solve
   the optimization problem using a general purpose optimizer.
     This is done to reflect a scenario in which the user has no access to the
     internals of the finite element code --- and requires an assembly phase at
     every function evaluation. Derivatives are computed here
     by finite differences. This is column ``BB'' in the table.
   \item[A] In this approach, we separately assemble the parametric generalized eigenvalue
     problem $K(\mathbf x) - \lambda M(\mathbf x)$ and then pass to a general 
     optimizer a function that evaluates the frequencies for a given value
     of $\mathbf x$. This function corresponds to the Lanczos projection 
     routine used internally in RM.  Derivatives are computed here
     by finite differences. This is approach ``A'' in the table.
   \item[AD] The same as A, but derivatives are evaluated analytically
     as described to Section~\ref{sec:derivatives}. This is ``AD'' in the table.

\end{description}
The general optimizer chosen for the BB, A and AD implementations is again 
{\tt fmincon} with the \texttt{sqp} option available in the Optimization toolbox of MATLAB.
In our experience, this choice turns out to be the most efficient 
among those available in the toolbox. This same function
has been used for 
computation of the step in the trust-region scheme of Algorithm \ref{alg:tr}.

In Table~\ref{tab:timings}, the time for assembly (which needs to be added to the costs in columns
A, AD, and RM) is reported separately, as well as the problem's degrees of freedom. 

It is immediately apparent that the proposed RM method is the fastest in all cases
except the arch on piers, where given the small dimension incurs in some overhead.
Moreover, it is worth stressing that our approach exhibits a complexity that
seems to grow nicely with the dimension. The general optimization approach BB, instead,
becomes costly when the number of degrees of freedom increases.

For instance, in the dome example, the timings range from about 7 hours for BB 
to slightly more than 3 minutes with our approach. The
 case where we supply derivatives to the optimizer (AD) 
still takes more than 30 minutes.

\begin{table}
    \begin{tabular}{c|cc|cc}
      Exp. freq. & Freq. ($w_i = 1$) & Rel. error ($w_i = 1)$ & Freq. ($w_i = f_i^{-1}$) & Rel. error ($w_i = f_i^{-1}$) \\ \hline
      $1.05$ Hz & $1.0621$ Hz & $1.15$\% & $1.0449$ Hz & $0.49$\% \\
      $1.30$ Hz & $1.3366$ Hz & $2.82$\% & $1.315$ Hz & $1.15$\% \\
      $4.19$ Hz & $4.2041$ Hz & $0.33$\% & $4.2154$ Hz & $0.61$\% \\
      $4.50$ Hz & $4.4729$ Hz & $0.60$\% & $4.4409$ Hz  & $1.31$\% \\
    \end{tabular}
    \caption{Frequencies and corresponding relative errors computed
        using the weights $w_i = 1$ and $w_i = f_i^{-1}$ for
        the problem of the clock tower in Lucca, Italy.}
    \label{tab:tore-relative-vs-absolute}
\end{table}

\begin{table}
    \centering
    \begin{tabular}{c|cccccc}
               & \# dof& BB & A & AD & \textbf{RM} & Assembly \\ \hline
        Arch on piers   & 851 & 16.40s    & 1.17s     & 0.62s  & \textbf{0.68s}   & 0.42s \\
        Dome        & 122,853 & 25,032s & 7,662.4s  & 1,990.5s & \textbf{226.02s} & 134.32s \\
        Clock tower & 45,511 & 476.9s   & 305.39s   & 202.87s & \textbf{44.10s}  & 32.22s \\
    \end{tabular}
    \caption{Timings for different optimization approaches. The timings
        reported for the approaches ``Assembled'' (A), ``Assembled $+$ derivatives'' (AD), and
        ``Reduced model'' (RM) do not include the time needed to assembly the
        parametric finite element model, which is reported in the
        last column. The Portal model has been run with the default
        starting points. }
    \label{tab:timings}
\end{table}

The timings suggest the unsurprising conclusion that better knowledge of the underlying
optimization problem leads to more efficient and faster optimization routines.
It should be noted that 
the stopping criterion for all the approaches is set to ensure that the first order
optimality condition (given by the norm of the projected gradient) is smaller than 
$\epsilon = 10^{-4}$ used in the experiments. All the approaches provide solutions
with comparable accuracy.

\section{Concluding remarks}
\label{sec:concluding-remarks}
In this work we have presented a model updating scheme focusing on the
optimization of finite element models for structural dynamics. We have shown
how the subspace projection method used to compute the eigenvalues at the
lowest end of the spectrum can be used directly as a parametric model
order reduction step, and that embedding it in a trust-region scheme
can be very effective. 

Several examples have been reported on, both to test the theoretical properties
of the scheme as well as to demonstrate its practical applicability. We have
also presented a preliminary numerical evidence of its robustness against perturbations in the data, an aspect which will be further investigated and characterized
in future work. 

We believe that embedding the model updating step directly in the finite
element code naturally leads to more efficient procedures, compared to applying
an optimizer fed with a black-box finite element code. The proposed approach can benefit
from the fact that the optimizer knows the details about the finite element
formulation, and this reveals to be particularly effective in terms 
of reliability and efficiency. 

Several problems remain open, and deserve further study in the future. For instance, optimization of mode shapes --- and 
not only eigenfrequencies --- is of crucial importance to obtain an
accurate understanding of the dynamic behavior of a structure. 

Moreover, a robust and efficient implementation of the approach within 
the NOSA-ITACA code needs to be performed --- along with its integration with the
graphical user interface based on SALOME. These issues will be addressed in future work.

\section*{References}

\bibliographystyle{elsarticle-harv}
\bibliography{paper}

\end{document}